\documentclass[11pt]{article}
\usepackage[T1]{fontenc}
\usepackage{lmodern}
\usepackage[utf8]{inputenc}
\usepackage[american]{babel}
\usepackage{csquotes}
\usepackage{amsmath,amssymb,amsfonts,amsthm,amscd,bm}
\usepackage{graphicx,indentfirst}
\usepackage{microtype}
\usepackage{tikz}
\usetikzlibrary{arrows.meta,positioning,calc,decorations.pathreplacing}
\usepackage[backend=biber,style=numeric,sorting=nyt,maxbibnames=99,giveninits=true,doi=true,url=false,isbn=false]{biblatex}
\AtEveryBibitem{\ifentrytype{online}{\clearfield{url}}{\iffieldundef{doi}{}{\clearfield{eprint}}}}
\usepackage[colorlinks=true,linkcolor=blue!50!black,citecolor=blue!50!black,urlcolor=blue!50!black]{hyperref}
\hypersetup{pdftitle={Stopping Rules and the Poisson Boundary of Random Walks on Countable Groups},pdfauthor={Yinghan Zhang},pdfsubject={Stopping distributions, entropy, and Poisson boundaries},pdfkeywords={Poisson boundary, stopping time, Green potential, entropy}}
\advance\textwidth by +1.0in \advance\textheight by +1.0in
\advance\oddsidemargin by -0.5in \advance\evensidemargin by -1.0in
\advance\topmargin by -0.5in
\numberwithin{equation}{section}
\newtheorem{theorem}{Theorem}[section]
\newtheorem{proposition}[theorem]{Proposition}
\newtheorem{corollary}[theorem]{Corollary}
\newtheorem{lemma}[theorem]{Lemma}
\theoremstyle{definition}

\newtheorem{question}[theorem]{Question}
\theoremstyle{remark}

\newtheorem*{TheoremA}{Theorem A}
\newtheorem*{TheoremB}{Theorem B}
\newtheorem*{TheoremC}{Theorem C}
\newtheorem*{TheoremD}{Theorem D}
\newcommand{\N}{\mathbb N}
\newcommand{\Z}{\mathbb Z}

\newcommand{\E}{\mathbb E}
\newcommand{\Prob}{\mathbb P}
\newcommand{\F}{\mathcal F}
\newcommand{\HH}{\mathcal H^{\infty}}
\newcommand{\supp}{\operatorname{supp}}
\newcommand{\Law}{\operatorname{Law}}
\newcommand{\TV}{d_{\mathrm{TV}}}
\newcommand{\one}{\mathbf 1}
\newcommand{\Poi}{\mathcal P}
\newcommand{\FSym}{\operatorname{FSym}}
\newcommand{\Sym}{\operatorname{Sym}}

\newcommand{\code}{\mathcal C}
\newcommand{\Bmu}{(B_\mu,\nu_\mu)}
\newcommand{\Beta}{(B_\eta,\nu_\eta)}

\title{\large\bf Stopping Rules and the Poisson Boundary of Random Walks on Countable Groups}
\author{
 Yinghan Zhang\thanks{College of Mathematics and Systems Science, Shandong University of Science and Technology, Qingdao, 266590, P.R. China. E-mail: \texttt{zhangyinghan007@126.com}.}}
\date{}
\begin{document}
\maketitle
\begin{abstract}
In this paper, we study the terminal laws and Poisson boundaries obtained by stopping random walks on countable groups. For a transient walk, we give a direct Green-potential characterization of randomized stopping distributions and show that the terminal law determines the mean stopping time. 
On free groups, this yields finitely supported randomized stopping distributions at a positive total variation distance from every ordinary stopping distribution. 
We construct symmetric, fully supported recurrent measures of finite entropy on countable locally finite groups and obtain ordinary strict boundary enlargements when the group is not Choquet--Deny. For generating source and terminal measures of finite entropy, positive asymptotic entropy of the source forces the stopping time to have finite mean. A direct proof of the finite-mean entropy formula and boundary preservation then shows that a finite-entropy stopping chain has at most one strict enlargement. An exact stopped-word entropy identity shows that infinite-mean ordinary stopping with a finite-entropy terminal law has infinite conditional history entropy.

\medskip
\noindent\textbf{Keywords and phrases:} Poisson boundary, stopping time, randomized stopping rule, Green potential, entropy, recurrent random walk.

\smallskip
\noindent\textbf{2020 Mathematics Subject Classification:} 60J50, 60G40, 60B15, 60J10.
\end{abstract}

\section{Introduction}\label{sec:intro}
 The Poisson boundary of a random walk represents its bounded harmonic functions and records the part of its asymptotic behavior that survives every finite shift of time. The entropy theory of Kaimanovich and Vershik \cite{KaimanovichVershik1983} provides a fundamental criterion for the triviality of this boundary. Kaimanovich's conditional entropy criterion \cite{Kaimanovich2000} gives a corresponding method for identifying a prescribed boundary with the full Poisson boundary. These methods have led to precise descriptions for broad classes of groups. In particular, Chawla, Forghani, Frisch, and Tiozzo \cite{ChawlaForghaniFrischTiozzo2025} identify the boundary of a non-elementary hyperbolic group for every generating measure of finite entropy. For wreath products, Frisch and Silva \cite{FrischSilva2026} identify the boundary with the joint data of the limiting lamp configuration and the boundary of the projected walk, under finite entropy and almost sure stabilization of the lamps.

Stopping times provide a natural way to change the step distribution while retaining a pathwise relation between the two walks. Let $G$ be a countable group, let $X_n=g_1\cdots g_n$ be the right random walk with step distribution $\mu$, and let $\eta=\Law(X_\tau)$ for an almost surely finite positive stopping time $\tau$. Optional stopping gives
\[
\HH(G,\mu)\subseteq\HH(G,\eta).
\]
Iterating the stopping rule realizes the $\eta$-walk as a subsequence of the $\mu$-walk. Its Poisson boundary therefore has the original boundary as a canonical factor. The reverse inclusion depends on the stopping rule.

Recent constructions show that this factor can be proper. Chawla and Frisch \cite{ChawlaFrisch2025} construct randomized stopping transformations with strictly larger spaces of bounded harmonic functions and obtain measures on free groups whose geometric boundary is a proper factor of the Poisson boundary. Their construction draws on the arboreal methods developed by Erschler and Kaimanovich \cite{ErschlerKaimanovich2023}. Forghani and Frisch \cite{ForghaniFrisch2026} show that convex combinations of Liouville measures can be non-Liouville, including finite-entropy examples on the lamplighter group over $\Z$. These results motivate the study of realizable terminal laws and of stopping transformations that strictly enlarge the boundary. General criteria for the inclusion of spaces of bounded harmonic functions are also studied by Hartman, Hru\v{s}kov\'{a}, and Segev \cite{HartmanHruskovaSegev2026}.

We studied the Dirichlet problem at infinity and Poisson boundary for certain manifolds without conjugate points in \cite{LiuZhang2025}. 
In this paper, we study stopping rules and the Poisson boundary of random walks on countable groups.
We study these problems through the potential of the source walk, the finite histories available to the stopping rule, and the entropy of the stopped walk. Randomized stopping allows the mass arriving at a history to be divided. Ordinary stopping, taken with respect to the natural filtration, must stop either the whole history cylinder or none of it. We show that this distinction can produce a uniform total variation gap, even when the randomized rule uses only times one and two. Recurrence has the opposite effect: it allows the walk to generate its own random target and then reach that target almost surely.

The Green-potential criterion below is a discrete realization of the potential-order principle in Rost's stopping-distribution theory \cite{Rost1971}. We include a direct proof by a state-dependent killing rule. The finite-mean entropy scaling formula is due to Forghani \cite{Forghani2017}, following earlier Abramov-type formulas for walks induced on finite-index subgroups \cite{HartmanLimaTamuz2014}. We give a proof that derives boundary preservation from entropy, together with a converse finite-mean criterion when the source has positive asymptotic entropy. The explicit separation estimates, the recurrent realization argument, and the stopped-word identity provide complementary information about the stopping rule itself.

We now state the main results. Throughout, a measure is called \emph{generating} if its support generates $G$ as a semigroup. We write $B_\mu\preccurlyeq B_\eta$ when there is a measure-preserving $G$-equivariant factor map from $B_\eta$ onto $B_\mu$, and $B_\mu\prec B_\eta$ when the canonical factor is proper. All logarithms are natural.

\begin{TheoremA}
Let $\mu$ be a transient probability measure on a countable group, and set
\[
U=\sum_{n\geq0}\mu^{*n}.
\]
A probability measure $\eta$ is the terminal law of an almost surely finite randomized stopping time $\tau\geq0$ if and only if $\eta*U\leq U$ pointwise. It is the terminal law of such a stopping time with $\tau\geq1$ if and only if
\[
\eta*U\leq U-\delta_e.
\]
Every embedding of $\eta$, ordinary or randomized, satisfies
\[
\E\tau=\sum_{x\in G}\bigl(U(x)-(\eta*U)(x)\bigr).
\]
\end{TheoremA}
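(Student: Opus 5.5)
We treat the stopped walk through its occupation measure before stopping. For a randomized stopping time $\tau$ with $\eta=\Law(X_\tau)$, define the pre-stopping occupation measure
\[
\gamma(x)=\E\sum_{n=0}^{\tau-1}\one\{X_n=x\}=\sum_{n\ge0}\Prob(X_n=x,\ \tau>n).
\]
Randomized stopping means the decision to stop at time $n$ is independent of $g_{n+1},g_{n+2},\dots$ given the history. From this we get the one-step balance
\[
\Prob(X_{n+1}=y,\tau>n)=\sum_x\Prob(X_n=x,\tau>n)\mu(x^{-1}y).
\]
Since $\Prob(X_n=y,\tau>n)=\Prob(X_n=y,\tau\ge n)-\Prob(X_n=y,\tau=n)$, summing over $n$ gives the \emph{balayage identity}
\[
\gamma+\eta=\delta_e+\gamma*\mu .
\]
Convolving on the right by $U$ and using $\mu*U=U-\delta_e$ yields $\gamma*U+\eta*U=U+\gamma*U-\gamma$. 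Hence formally
\[
\gamma=U-\eta*U.
\]
This identity drives all three parts.

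\textbf{Justifying the cancellation.} The cancellation of $\gamma*U$ needs finiteness. Instead we write $U=\E\sum_{n<\tau}\one\{X_n=\cdot\}+\E\sum_{n\ge\tau}\one\{X_n=\cdot\}$. By the strong Markov property at a randomized stopping time, the second term equals $\eta*U$. Here we use that $(X_\tau^{-1}X_{\tau+m})_m$ is an independent $\mu$-walk, which holds for randomized times via the enlarged filtration. Transience makes $U$ finite pointwise. This gives $\gamma=U-\eta*U$ directly, with every term nonnegative and finite.

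Necessity is then immediate: $\eta*U\le U$. If $\tau\ge1$, then $\gamma(e)\ge1$, so $\eta*U\le U-\delta_e$. The mean formula follows from $\E\tau=\sum_x\gamma(x)$ by Tonelli. So $\E\tau=\sum_x(U-\eta*U)(x)$ for every embedding, possibly $+\infty$ on both sides.

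\textbf{Sufficiency by state-dependent killing.} Given $\eta$ with $\eta*U\le U$, set $\gamma:=U-\eta*U\ge0$. Using $\mu*U=U-\delta_e$ again, this $\gamma$ satisfies the balance identity $\gamma+\eta=\delta_e+\gamma*\mu$. The rule is then: when the walk is at $x$ (with total arrival mass $\alpha(x):=\gamma(x)+\eta(x)=\delta_e(x)+(\gamma*\mu)(x)$), stop with probability $\eta(x)/\alpha(x)$, independently of everything else. This is a Markov killing rule. Its occupation measure $\tilde\gamma$ solves the same recursion $\tilde\gamma+\tilde\eta=\delta_e+\tilde\gamma*\mu$ with $\tilde\eta(x)=\frac{\eta(x)}{\alpha(x)}(\delta_e+\tilde\gamma*\mu)(x)$.

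\textbf{Main obstacle: minimality and no mass at infinity.} We must show $\tilde\gamma=\gamma$ and that $\tau<\infty$ almost surely. The plan is to show $\tilde\gamma\le\gamma$ as the minimal nonnegative solution, by iterating the recursion from zero and inducting on time. Since $\gamma\le U<\infty$, the defect $\gamma-\tilde\gamma$ is then a nonnegative $\mu$-"harmonic-type" solution of $h=h*\mu\cdot(1-\eta/\alpha)$ bounded by $U$.

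To conclude we would compare total masses. We have $\Prob(\tau<\infty)=\sum\tilde\eta$. The mass lost at infinity corresponds to $\lim_n\Prob(\tau>n)$, and $\Prob(X_n=x,\tau>n)\le$ the $n$-th term of a series dominated by $\gamma*$-type potentials. Here the key input is that $\eta*U\le U$ with $\eta$ a \emph{probability} measure forces $\sum\tilde\eta=1$. We would argue that the surviving mass equals $\eta*U-\tilde\eta*U$ evaluated asymptotically, and use transience ($\mu^{*n}*f\to0$ for potentials $f=U$ restricted suitably) to show that the defect is a potential of zero charge, hence zero by the Riesz decomposition.

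This Riesz-decomposition step is where care is needed. If it fails directly, we would truncate to $\eta$ finitely supported, apply the argument, and pass to a limit of stopping rules using compactness of randomized stopping times.

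For $\tau\ge1$, the condition $\gamma(e)\ge1$ means $\alpha(e)\ge1+\eta(e)$. We modify the rule to never stop at time $0$, treating the initial visit as part of $\gamma$, so the same construction applies.
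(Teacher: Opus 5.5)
Your necessity argument and mean formula match the paper's: split $U$ into visits before and after $\tau$, apply the strong Markov property, and sum the occupation measure by Tonelli. Your sufficiency rule is also the paper's killing rule, with your $\alpha=\gamma+\eta$ playing the role of the paper's arrival measure $M$. The reduction to the defect $h=\gamma-\tilde\gamma$, with $0\le h\le\gamma\le U$ and $h=r\,(h*\mu)$ for $r=1-\eta/\alpha$, is correct.

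The gap is that you never show $h=0$. Without it you have neither $\tau<\infty$ almost surely nor $X_\tau\sim\eta$. The paragraph on mass at infinity (``surviving mass equals $\eta*U-\tilde\eta*U$ asymptotically'', ``a potential of zero charge'') is not an argument. The fallback does not help either. A limit of almost surely finite randomized stopping times need not be almost surely finite, which is exactly the point at issue. Truncating $\eta$ leaves the same defect equation to solve, and renormalizing a truncation can destroy $\eta*U\le U$.

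The missing step is short. Since $0\le r\le1$ and $h\ge0$, you have $h\le h*\mu$. Right convolution by $\mu$ is monotone, so for every $n$,
\[
h\le h*\mu^{*n}\le U*\mu^{*n}=\sum_{j\ge n}\mu^{*j}.
\]
The right-hand side tends to $0$ pointwise because $U(x)<\infty$ by transience. Hence $\tilde\gamma=\gamma$, and the actual arrival measure $\delta_e+\tilde\gamma*\mu$ equals $\alpha$. The killed mass at $x$ is then $\eta(x)$, and since $\sum_x\eta(x)=1$ the rule stops almost surely with law $\eta$. No total-mass comparison or Riesz decomposition is needed. This is exactly the paper's argument, phrased there for arrival measures as $0\le D=DQ^n\le UP^n\to0$.

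In the positive-time case, say explicitly that the arrival measure at positive times is $\alpha-\delta_e$ (the paper's $M_+$). The killing probability at $e$ must therefore be $\eta(e)/(\alpha(e)-1)$ rather than $\eta(e)/\alpha(e)$. Your condition $\alpha(e)\ge1+\eta(e)$ is what makes this at most one, and the same domination argument closes that case.
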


\begin{TheoremB}
Let $\mu$ be the simple random walk measure on the free group $F_d$, where $d\geq2$, and put $q=2d$. If $2/q<p<1$, the measure
\[
\eta=p\mu+(1-p)\mu^{*2}
\]
is a randomized stopping distribution, whereas every ordinary positive-time stopping distribution $\rho$ satisfies
\[
\TV(\rho,\eta)\geq\min\left\{\frac{1-p}{q},\ p-\frac2q\right\}>0.
\]
Thus ordinary stopping distributions need not form a convex set and need not be dense among randomized stopping distributions.
\end{TheoremB}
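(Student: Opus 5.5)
The randomized half needs no potential theory. Let $g_1,g_2$ be the first two increments and $U$ an independent uniform variable. Set $\tau=1$ if $U<p$ and $\tau=2$ otherwise. This is a randomized stopping time, and its terminal law is $p\mu+(1-p)\mu^{*2}=\eta$. As a consistency check, $\eta$ then satisfies the criterion of Theorem A, with $\E\tau=2-p$. For later use I record
\[
\eta(e)=\tfrac{1-p}{q},\qquad \eta(s)=\tfrac pq \ \text{for each generator } s,\qquad \eta(x)=\tfrac{1-p}{q^2}\ \text{for } |x|=2 .
\]

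For the ordinary half, let $\tau\ge1$ be a stopping time for the natural filtration and $\rho=\Law(X_\tau)$. The event $\{\tau=1\}$ is a union of cylinders $\{X_1=s\}$, so there is a set $A$ of generators with $\{\tau=1\}=\{X_1\in A\}$. I would split into cases according to $A$.

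\emph{Case $A$ is everything.} Then $\tau\equiv1$ and $\rho=\mu$. Testing on $\{e\}$ gives $\TV(\rho,\eta)\ge\eta(e)-\rho(e)=(1-p)/q$.

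\emph{Case $A\ne$ everything.} Choose $s\notin A$. Every path with $X_1=s$ continues, so any mass $\rho$ puts on $s$ comes from a visit to $s$ at a time $\ge2$. Using the tree structure, I would bound this by hitting probabilities of the simple random walk on the $q$-regular tree. Returning to a vertex after leaving it has probability $1/(q-1)$. Hitting a vertex at distance $2$ has probability $(q-1)^{-2}$.

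The crude pointwise estimate is $\rho(s)\le\frac1q\cdot\frac1{q-1}+\frac{q-1}{q}\cdot\frac1{(q-1)^2}$. This already gives a positive gap once $p$ is not too small, but it is only of order $p/q$.

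To reach the stated constant $p-2/q$, I would test on an aggregated set rather than a single point. Let $B=S\setminus A$ be the set of non-stopping generators, where $S$ is the generating set. The natural test set is $\{e\}\cup B$, or the whole sphere $S$ in the subcase $A=\emptyset$. In the subcase $A=\emptyset$ we have $\tau\ge2$. The $\rho$-mass of the sphere $S$ then comes only from paths that revisit a generator after time $1$. I expect this mass to be at most $2/q$, via the escape estimates above applied through the Green function $U$. Since $\eta(S)=p$, this gives $\TV\ge p-2/q$.

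When $A$ is a proper nonempty subset, I would compare the two contributions on the test set. $\eta$ gives $\{e\}\cup B$ mass $\frac{1-p}q+|B|\frac pq$. $\rho$ gives it only the return mass of paths with $X_1\in B$ that come back to $e$ or into $B$. That return mass is at most $\frac{|B|}{q}\cdot\frac{2}{q-1}$ or so. The difference then dominates $\min\{(1-p)/q,\,p-2/q\}$, after possibly switching the test set to $\{e\}$ when $B$ is small. For the return bounds I would reuse the inequality $\rho*U\le U-\delta_e$ of Theorem A, which is available since $\mu$ is transient on $F_d$ for $d\ge2$.

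The main obstacle is exactly this bookkeeping. One must choose the test set in each case so that the hitting-probability bounds yield precisely $\min\{(1-p)/q,\,p-2/q\}$ rather than a weaker constant. In particular, the mixed case needs a uniform estimate of return mass in terms of $|B|$.

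Finally, the non-convexity and non-density claims follow from these bounds. Both $\mu$ (take $\tau=1$) and $\mu^{*2}$ (take $\tau=2$) are ordinary stopping distributions. Their convex combination $\eta$ lies at positive total variation distance from the ordinary class, and it is itself a randomized stopping distribution.
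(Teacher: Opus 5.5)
There is a genuine gap, and it is the case you flag yourself as bookkeeping: for $\emptyset\neq A\subsetneq S$ nothing is actually proved, and the estimates you propose do not give the stated constant. With the return mass bounded by $\frac{|B|}{q}\cdot\frac{2}{q-1}$, the test set $\{e\}\cup B$ only yields $\frac{1-p}{q}+\frac{|B|}{q}\bigl(p-\frac{2}{q-1}\bigr)$. For $q=4$, $p=0.55$, $|B|=3$ this equals $0.025$, which is less than $0.05=\min\{\frac{1-p}{q},p-\frac2q\}$. The fallback test set $\{e\}$ only yields $\frac{1-p}{q}-\frac{|B|}{q(q-1)}$, which is negative for $p$ near $1$. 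Theorem A's inequality $\rho*U\leq U-\delta_e$ is also not the right tool for the return bounds. The target $\eta$ satisfies it as well, so it cannot encode the constraint that paths with $G_1\in B$ continue. What you need are set-hitting probabilities on the tree.

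The missing idea is to test at a generator \emph{in} $A$ rather than outside it. The cylinder $\{G_1=s\}$ is an atom of $\F_1$ of mass $1/q$. So if $s\in A$, the whole cylinder stops at $s$ and $\rho(s)\geq1/q$, while $\eta(s)=p/q$. This gives $\TV(\rho,\eta)\geq\frac{1-p}{q}$ for every nonempty $A$ at once, your case $A=S$ included. It is exactly the indivisibility that a randomized rule avoids. Only $A=\emptyset$, i.e.\ $\tau\geq2$, remains. There the bound you only expect is a short computation: $\rho(S)\leq\Prob_e(\exists n\geq2:X_n\in S)=\frac1q\cdot1+\frac{q-1}{q}\cdot\frac1{q-1}=\frac2q$. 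This uses that from distance two the radial chain ever reaches distance one with probability $1/(q-1)$. With $\eta(S)=p$, this gives the bound $p-\frac2q$. This two-case split is the paper's proof.

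Your test set can also be rescued, but only with the exact hitting probability. From $s\in B$, reaching the ball of radius one at a positive time means stepping to $e$ or coming back through $s$. That has probability $\frac1q+\frac{q-1}{q}\cdot\frac1{q-1}=\frac2q$, not $\frac2{q-1}$. Then $\eta(\{e\}\cup B)-\rho(\{e\}\cup B)\geq\frac{1-p}{q}+\frac{|B|}{q}\bigl(p-\frac2q\bigr)\geq\frac{1-p}{q}$ for every $A\neq S$.
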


The same conclusion holds for a range of lazy free-group walks, with explicit bounds given in Theorem~\ref{thm:lazy}. In each example the target is symmetric, finitely supported, and of finite entropy.

\begin{TheoremC}
Every probability measure on a countable group is an ordinary positive-time stopping distribution of any irreducible recurrent walk whose step distribution is not a point mass. Moreover, every countable infinite locally finite group admits a symmetric, fully supported recurrent measure of finite entropy. If this locally finite group is not Choquet--Deny, there exist symmetric, fully supported measures $\mu,\eta$ of finite entropy and an ordinary almost surely finite positive stopping time $\tau$ such that $\mu$ is recurrent and Liouville and
\[
\eta=\mu_\tau,\qquad B_\mu\prec B_\eta.
\]
\end{TheoremC}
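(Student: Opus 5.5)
The proof has three parts, taken in the order of the statement.

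\emph{Part 1: any target is an ordinary stopping distribution.} Let $\mu$ be irreducible and recurrent, not a point mass, and let $\eta$ be any probability measure. I will build an ordinary stopping time adapted to the natural filtration in two phases. In the first phase the walk generates a random target $Y$ with law $\eta$ from its own increments. Since $\mu$ is not a point mass, there are $a\neq b$ in its support. Group the increments into consecutive blocks of length one, and record at each step whether $g_n=a$, $g_n=b$, or neither. Discard the ``neither'' steps. Conditionally on being recorded, each step is $a$ or $b$ with fixed probabilities $\alpha/(\alpha+\beta)$ and $\beta/(\alpha+\beta)$, and these are independent across steps. A von Neumann pairing turns them into fair coin flips. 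Fair bits in turn give a stopping rule, finite almost surely, that outputs $Y\sim\eta$: enumerate $G$, represent the cumulative distribution of $\eta$ in binary, and stop once the dyadic interval determined by the bits lies inside one atom. This stopping time $\sigma$ is finite almost surely. In the second phase the walk is at $X_\sigma$, and the strong Markov property together with recurrence and irreducibility shows that it hits $Y$ almost surely afterwards. Set $\tau$ equal to the first time after $\sigma$ at which $X_n=Y$. Then $\tau\geq1$ and $X_\tau=Y$, so $\Law(X_\tau)=\eta$. The remaining care is to make $\tau$ positive even when $Y=X_\sigma$ and $\sigma=0$; requiring a first hit strictly after $\sigma\geq1$ handles this, since $\sigma\geq1$ holds automatically.

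\emph{Part 2: a symmetric, fully supported recurrent measure of finite entropy.} Write $G=\bigcup_k G_k$ with $G_k$ finite subgroups and $|G_k|\to\infty$. Put $\mu=\sum_k c_k u_{G_k}$, where $u_{G_k}$ is uniform measure on $G_k$. This measure is symmetric and fully supported. For recurrence I will use the standard estimate: $\mu^{*n}(e)\geq\Prob(\text{all }n\text{ steps lie in }G_k)/|G_k|$ once some step has been uniform on $G_k$. This is roughly $(1-\sum_{j>k}c_j)^n/|G_k|$, taken at $n\approx 1/\sum_{j>k}c_j$. Choosing $c_k$ so that $|G_k|\sum_{j>k}c_j\to0$ quickly gives $\sum_n\mu^{*n}(e)=\infty$. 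For finite entropy I need $\sum_k c_k\log|G_k|<\infty$ (together with $H(c)<\infty$). This forces $|G_k|$ to grow sufficiently fast, which I arrange by passing to a subsequence of the $G_k$. Then $c_k\approx 2^{-k}/|G_{k-1}|$ satisfies both conditions when $\log|G_k|\ll 2^k|G_{k-1}|$. That growth condition is compatible with the tail requirement, again after passing to a subsequence.

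\emph{Part 3: strict enlargement.} Because $G$ is not Choquet--Deny, there is a symmetric, fully supported, finite-entropy non-Liouville measure $\eta$. I would cite the known result that non-Choquet--Deny groups admit such measures, in the form due to Frisch--Hartman--Tamuz--Vahidi Ferdowsi; on locally finite groups one can take it symmetric with full support by mixing with the measure of Part~2. Take $\mu$ from Part~2. It is recurrent, hence Liouville, since recurrent walks have trivial Poisson boundary. By Part~1, $\eta=\mu_\tau$ for an ordinary positive stopping time. Then $B_\mu$ is trivial while $B_\eta$ is nontrivial, so $B_\mu\prec B_\eta$.

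The main obstacle is the quantitative balancing in Part~2: recurrence wants small tails, finite entropy wants $c_k\log|G_k|$ summable, and both must be made compatible by a single choice of subsequence and weights.
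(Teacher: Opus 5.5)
Your Parts 1 and 3 follow the paper's argument: von Neumann extraction of fair bits from the increments, interval sampling of $Y\sim\eta$ at a stopping time $\sigma$, and the first visit to $Y$ after $\sigma$; then recurrence gives Liouville, and the Frisch--Hartman--Tamuz--Vahidi Ferdowsi measure serves as target.

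\textbf{The gap is in Part 2.} With your weights $c_k\approx 2^{-k}/|G_{k-1}|$, the quantity $\sum_k c_k\log|G_k|$ also bounds the entropy from below. If $N$ is the mixture index and $Z$ the sampled element, then $H(\mu)=H(Z)\geq H(Z\mid N)=\sum_k c_k\log|G_k|$. So you need $\sum_k 2^{-k}\log|G_k|/|G_{k-1}|<\infty$. This is an \emph{upper} bound on the jumps of the exhaustion, and passing to a subsequence moves the wrong way. It enlarges each consecutive ratio $|G_k|/|G_{k-1}|$ and moves each jump to an earlier index, where $2^{-k}$ is larger.

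For some groups no exhaustion works. Take $\bigoplus_{i\geq1}C_{p_i}$ with increasing primes satisfying $\log p_{i+1}\geq 4^i p_1\cdots p_i$. Every finite subgroup is $\bigoplus_{i\in S}C_{p_i}$ for some finite $S$. Fix any strictly increasing exhaustion, and let $G_k$ be the first term not contained in $\bigoplus_{l\leq i}C_{p_l}$. Then $k\leq i+2$, $|G_{k-1}|\leq p_1\cdots p_i$, and $|G_k|\geq p_{i+1}$. The corresponding term is of order at least $2^{i}$, so $H(\mu)=\infty$.

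\textbf{The missing observation is that nothing needs balancing.} Recurrence and finite entropy are both \emph{upper} bounds on the tails $r_k=\sum_{j>k}c_j$. Since the exhaustion is fixed in advance, $r_k$ may be chosen in terms of $|G_{k+1}|$. The paper takes $r_0=1$, $r_k=\min\{r_{k-1}/2,\ 1/(2|G_k|),\ 2^{-k}/(1+\log|G_{k+1}|)\}$ and $c_k=r_{k-1}-r_k$. Then $c_k\log|G_k|\leq r_{k-1}\log|G_k|\leq 2^{-(k-1)}$ and $c_k\leq 2^{-(k-1)}$, which give finite entropy for every exhaustion. For recurrence, every summand of the exact return sum $\sum_{n\geq1}\mu^{*n}(e)=\sum_k|G_k|^{-1}(r_k^{-1}-r_{k-1}^{-1})$ is at least $1$. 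If you prefer your one-level bound $\sum_n\mu^{*n}(e)\geq(1-r_k)/(r_k|G_k|)$, replace $1/(2|G_k|)$ by $2^{-k}/|G_k|$.

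\textbf{A smaller point in Part 3.} Your remark that full support can be obtained by mixing with the Part~2 measure needs an argument, since non-Liouvilleness is not stable under convex combinations in general. The paper takes full support directly from the cited theorem. Alternatively, from a symmetric generating non-Liouville $\theta$ you could use $\sum_{n\geq1}2^{-n}\theta^{*n}$, which has the same bounded harmonic functions.
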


Here a group is Choquet--Deny if every generating probability measure on it is Liouville. The existence of the non-Liouville target in Theorem~C follows from the theorem of Frisch, Hartman, Tamuz, and Vahidi Ferdowsi \cite{FrischHartmanTamuzVahidiFerdowsi2019}. The recurrent source and its ordinary stopping realization are constructed in Section~\ref{sec:recurrence}.

\begin{TheoremD}
Let $\mu$ and $\eta=\mu_\tau$ be generating probability measures on the same countable group, where $1\leq\tau<\infty$ almost surely, with randomization allowed.
\begin{enumerate}
\item If $H(\mu),H(\eta)<\infty$ and $h(\mu)>0$, then $\E\tau<\infty$.
\item If $H(\mu)<\infty$ and $m=\E\tau<\infty$, then
\[
H(\eta)<\infty,\qquad h(\eta)=m h(\mu),\qquad B_\eta\cong B_\mu
\]
through the canonical factor map.
\item If both measures have finite entropy and $B_\mu\prec B_\eta$, then
\[
h(\mu)=0,\qquad \E\tau=\infty,\qquad h(\eta)>0.
\]
\end{enumerate}
\end{TheoremD}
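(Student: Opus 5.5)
We prove the three parts in the order (2), (1), (3), since (3) follows formally from the first two.

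\emph{Part (2).} We realize the $\eta$-walk along the $\mu$-path through the iterated stopping times $\tau_0=0<\tau_1<\tau_2<\cdots$. The increments $\tau_{k}-\tau_{k-1}$ are i.i.d.\ copies of $\tau$. Any auxiliary randomization is attached to the increments, so that $Y_k=X_{\tau_k}$ is an $\eta$-walk.

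Finiteness of $H(\eta)$ comes from bounding $H(X_\tau)$ by the entropy of the stopped word $(g_1,\dots,g_\tau)$. Write this entropy as $H(\tau)+\E\tau\cdot H(\mu)$, a Wald-type identity obtained by conditioning on $\{\tau\ge n\}$, which is independent of $g_n$ given the past. Then $H(\tau)<\infty$ whenever $\E\tau<\infty$, because the geometric law maximizes entropy at a given mean.

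For the entropy formula, use the Shannon--McMillan--Breiman / Kingman form $h(\mu)=\lim -\frac1n\log\mu^{*n}(X_n)$ almost surely and in $L^1$, and similarly for $\eta$. The strong law gives $\tau_k/k\to m$. Along the subsequence $n=\tau_k$ we then get $-\frac1k\log\mu^{*\tau_k}(X_{\tau_k})\to m\,h(\mu)$.

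The main obstacle is comparing $\eta^{*k}(Y_k)$ with $\mu^{*\tau_k}(X_{\tau_k})$. I would avoid this pointwise comparison and use the entropy differences instead: $h(\eta)=\lim_k\bigl(H(Y_{k+1})-H(Y_k)\bigr)$ and $H(Y_k)=H(X_{\tau_k})$. The asymptotic entropy is the conditional entropy of $X_1$ given the tail (Kaimanovich--Vershik). Since $\tau$ is a stopping time, the tail of the subsequence equals the tail of the full walk modulo null sets. This is where I would spend effort: I would first show that the invariant $\sigma$-fields agree, using that the shift by $\tau_1$ is measure-preserving. That gives $h(\eta)=H(\eta)-H(Y_1\mid\text{tail})$. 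Expanding $Y_1$ as the stopped word and applying the chain rule with the Wald identity gives $h(\eta)=m\bigl(H(\mu)-H(X_1\mid\text{tail})\bigr)=m\,h(\mu)$.

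For boundary preservation, apply Kaimanovich's entropy criterion. The canonical factor $B_\mu$ of $B_\eta$ is everything exactly when the conditional entropy of the $\eta$-walk given $B_\mu$ equals $h(\eta)$. The conditional (Doob-transformed) versions of the same computation give $h_{B_\mu}(\eta)=m\,h_{B_\mu}(\mu)=m\,h(\mu)=h(\eta)$, the middle equality because $\mu$'s boundary is $B_\mu$. Hence $B_\eta\cong B_\mu$.

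\emph{Part (1).} Suppose $\E\tau=\infty$. Then $\tau_k/k\to\infty$ almost surely. By Kingman, $-\frac1{\tau_k}\log\mu^{*\tau_k}(X_{\tau_k})\to h(\mu)>0$, so $-\frac1k\log\mu^{*\tau_k}(Y_k)\to\infty$. On the other hand, $\mu^{*\tau_k}(Y_k)$ should be compared with $\eta^{*k}(Y_k)$. Instead, I would use the standard entropy bound $H(Y_k)\ge\E[-\log\mu^{*n}(X_n)]$-type estimates. Concretely, $H(X_n)\le H(X_{\tau_k})+H(\text{path from }\tau_k\text{ to }n\mid\ldots)$ on $\{\tau_k\ge n\}$. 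More robustly, the walk $Y_k$ typically lies in a set of $\mu^{*n}$-measure controlled by $e^{-n h(\mu)/2}$ for $n\approx\tau_k$. The Shannon bound then forces $H(Y_k)\ge c\,\E\min(\tau_k,\cdot)\,h(\mu)$, which grows superlinearly in $k$. This contradicts $H(Y_k)\le kH(\eta)<\infty$. The delicate point is randomness of the comparison time, handled by truncating $\tau_k\wedge N$ and using Fatou.

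\emph{Part (3).} If $B_\mu\prec B_\eta$, then part (2) forces $\E\tau=\infty$, and part (1) then forces $h(\mu)=0$. Finally, $h(\eta)>0$ because a nontrivial $B_\eta$ with $H(\eta)<\infty$ requires positive entropy by Kaimanovich--Vershik, since $B_\eta$ strictly dominates something and so is nontrivial.
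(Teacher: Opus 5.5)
\textbf{Part (2) has a gap at the decisive step.} Part (3) is a correct formal consequence of (1) and (2), exactly as in the paper. The problem is your claim that the tail/invariant $\sigma$-field of $(Y_k)$ agrees with that of $(X_n)$ ``since $\tau$ is a stopping time'' and because the shift by $\tau_1$ is measure-preserving.

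\begin{itemize}
\item Neither fact uses $\E\tau<\infty$, and the conclusion is false for general ordinary stopping times. In Theorem~\ref{thm:locally-finite-growth} the source is Liouville and the stopped walk is not. So there the invariant $\sigma$-field of $(X_n)$ is trivial while that of $(Y_k)$ is not.
\item Equality of these $\sigma$-fields \emph{is} the statement $B_\eta\cong B_\mu$. Your argument is therefore circular: boundary equality is used to get $h(\eta)=mh(\mu)$, which is then used to get boundary equality.
\item What your chain-rule/Wald computation genuinely gives is only the lower half. Put $F=c(G_1,b_\mu)$; it is integrable. Wald's identity applied to $c(X_\tau,b_\mu)=\sum_{j<\tau}F\circ\vartheta^j$ gives $h_\eta(B_\mu,\nu_\mu)=mh(\mu)$. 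Since $B_\mu$ is an $\eta$-boundary, this yields $mh(\mu)\le h(\eta)$.
\item The missing ingredient is the upper bound $h(\eta)\le mh(\mu)$, which needs its own argument. The paper compares $X_{T_k}$ with $X_{\lfloor mk\rfloor}$. It bounds the entropy of the intervening word by $\log 2+H(L_k)+H(\mu)\E L_k$, which is $o(k)$ by Lemma~\ref{lem:word-entropy-bound} and the $L^1$ law of large numbers.
\item With that bound you get $mh(\mu)=h_\eta(B_\mu,\nu_\mu)=h(\eta)$. Your final Furstenberg-entropy-criterion step then finishes the identification; the paper does this via strict conditional Jensen on the Radon--Nikodym kernels.
\item Minor point: $H(W)=H(\tau)+mH(\mu)$ is only an upper bound, not an identity. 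The upper bound is all you need.
\end{itemize}

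\textbf{Part (1) is not a proof as written.} The Shannon--McMillan--Breiman/Kingman asymptotics of $-\log\mu^{*n}(X_n)$ describe the law of the walk at a \emph{deterministic} time. They give no control of $\Prob(Y_k=g)=\sum_n\Prob(T_k=n,X_n=g)$, which mixes infinitely many times. Neither the ``typical set for $n\approx\tau_k$'' idea nor truncation at $\tau_k\wedge N$ yields a superlinear lower bound on $H(Y_k)$. The segment between $\tau_k\wedge N$ and $\tau_k$ has infinite-mean length and uncontrolled entropy.

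The quantity that behaves well at random times is the additive boundary cocycle $c(g,b)=\log\frac{d(g\nu_\mu)}{d\nu_\mu}(b)$:
\begin{itemize}
\item $c(X_n,b_\mu)$ is a Birkhoff sum of an integrable function. Hence $c(X_{T_k},b_\mu)/T_k\to h(\mu)$ along any random times.
\item $\nu_\mu$ is $\eta$-stationary, and conditionally on $Y_k=g$ the boundary point has law $g\nu_\mu$, because a fresh $\mu$-future starts at $T_k$. Therefore $\E c(Y_k,b_\mu)=kh_\eta(B_\mu,\nu_\mu)\le kH(\eta)$, with $\E[c(Y_k,b_\mu)^-]\le 1/e$.
\item If $\E\tau=\infty$, then $T_k/k\to\infty$, and Fatou's lemma applied to the positive parts gives the contradiction.
\end{itemize}
This is the paper's argument, and it is what your sketch is missing.
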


For ordinary stopping, the last conclusion has an exact information-theoretic form. If $W=(g_1,\ldots,g_\tau)$ is the stopped word and $0<H(\mu)<\infty$, then
\[
H(W)=H(\mu)\E\tau=H(X_\tau)+H(W\mid X_\tau).
\]
Consequently, a finite-entropy strict enlargement by ordinary stopping has $H(W\mid X_\tau)=\infty$: infinitely much historical information is lost when a stopped word is replaced by its group product.

The paper is organized as follows. Section~\ref{sec:setup} fixes the conventions and constructs the canonical boundary factor. Section~\ref{sec:potential} proves Theorem~A and applies it to convex mixtures. Section~\ref{sec:prefix} describes ordinary stopping by complete prefix codes and proves Theorem~B and its lazy extension. Section~\ref{sec:recurrence} proves Theorem~C and gives an explicit recurrent measure on the group of finitely supported permutations. Section~\ref{sec:entropy} proves Theorem~D and the stopped-word entropy identity. Section~\ref{sec:conclusions} records consequences for stopping chains and the remaining finite-generation problem.

\section{Basic setting and the canonical boundary factor}\label{sec:setup}
\subsection{Random walks and stopping rules}
Let $G$ be a countable discrete group with identity $e$. For nonnegative measures $\alpha,\beta$, convolution is defined by
\[
(\alpha*\beta)(x)=\sum_{y\in G}\alpha(y)\beta(y^{-1}x).
\]
We put $\mu^{*0}=\delta_e$ and $g\alpha=\delta_g*\alpha$. The right random walk started at $x$ is
\[
X_0=x,\qquad X_n=xg_1\cdots g_n,
\]
where the increments $g_j$ are independent with common law $\mu$. Its transition operator is
\[
P_\mu f(x)=\sum_{s\in G}\mu(s)f(xs).
\]
Unless a starting point is specified, $X_0=e$. We use the natural filtration $\F_n=\sigma(g_1,\ldots,g_n)$, with $\F_0$ trivial, and set
\[
\HH(G,\mu)=\{f\in\ell^\infty(G):P_\mu f=f\}.
\]
The walk is irreducible precisely when $\mu$ is generating in the semigroup sense.

An \emph{ordinary stopping time} is a stopping time for $(\F_n)$. A \emph{randomized stopping time} is a stopping time after adjoining auxiliary random variables independent of the entire increment sequence. One may take a sequence of independent uniform variables; revealing all auxiliary variables at time zero gives the same class of admissible rules. The future increments retain their original conditional law in this enlarged filtration. We write
\[
\mu_\tau(g)=\Prob(X_\tau=g).
\]
Except in the explicitly stated time-zero version of Theorem~\ref{thm:potential}, stopping times are positive and almost surely finite. Rules are expressed in terms of increments and are restarted relative to the current position. Every restart uses fresh auxiliary randomness.

For probability measures on $G$, our total variation convention is
\[
\TV(\alpha,\beta)=\sup_{A\subseteq G}|\alpha(A)-\beta(A)|
=\frac12\sum_{g\in G}|\alpha(g)-\beta(g)|.
\]
The entropy and asymptotic entropy are
\[
H(\mu)=-\sum_g\mu(g)\log\mu(g),\qquad
h(\mu)=\lim_{n\to\infty}\frac{H(\mu^{*n})}{n}
\quad\text{if }H(\mu)<\infty.
\]
We take $0\log0=0$. The limit exists by subadditivity, since the product of two independent group-valued variables has entropy at most the sum of their entropies.

\subsection{The Poisson boundary}
For a generating measure $\mu$, its Poisson boundary is a standard probability $G$-space $\Bmu$ such that the Poisson transform
\begin{equation}\label{eq:poisson}
\Poi_\mu F(g)=\int_{B_\mu}F(b)\,d(g\nu_\mu)(b)
\end{equation}
is an order-preserving isometric linear isomorphism from $L^\infty(B_\mu,\nu_\mu)$ onto $\HH(G,\mu)$. The measure $\nu_\mu$ is $\mu$-stationary:
\[
\nu_\mu=\sum_g\mu(g)g\nu_\mu.
\]
It is quasi-invariant under $G$. To see the latter assertion, stationarity implies $g\nu_\mu\ll\nu_\mu$ for every product $g$ of elements of $\supp\mu$; applying the same assertion to $g^{-1}$ gives equivalence of the two measures.

There is a boundary variable $b_\mu$ on the path space such that, for every $F\in L^\infty(B_\mu,\nu_\mu)$,
\begin{equation}\label{eq:mart-limit}
\Poi_\mu F(X_n)\longrightarrow F(b_\mu)\qquad\text{almost surely}.
\end{equation}
The boundary variable has law $\nu_\mu$ when the walk starts at $e$, and law $g\nu_\mu$ when it starts at $g$. Equivalently, the boundary is the measurable quotient of path space by events invariant under deleting a finite initial segment. A \emph{$\mu$-boundary} is a measure-preserving equivariant factor of $\Bmu$. The measure $\mu$ is \emph{Liouville} if $\HH(G,\mu)$ consists of constants, or equivalently if $B_\mu$ is a one-point space modulo null sets.

\begin{proposition}\label{prop:factor}
Let $\tau\geq1$ be an almost surely finite ordinary or randomized stopping time, and let $\eta=\mu_\tau$. Then
\[
\HH(G,\mu)\subseteq\HH(G,\eta).
\]
If both measures are generating, this inclusion is induced by a canonical measure-preserving $G$-equivariant map
\[
q:B_\eta\longrightarrow B_\mu.
\]
The maps are compatible with successive stopping transformations.
\end{proposition}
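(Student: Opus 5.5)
The inclusion of harmonic spaces follows from optional stopping. Let $f\in\HH(G,\mu)$ and fix $x\in G$. Run the walk from $x$, so that $M_n=f(xg_1\cdots g_n)$ is a bounded martingale for the natural filtration. The same holds in the enlarged filtration used for randomized rules, because the auxiliary variables are independent of the increments and the future increments keep their conditional law. Since $\tau$ is almost surely finite and $M$ is bounded, the optional stopping theorem (via $M_{\tau\wedge n}$ and bounded convergence) gives
\[
f(x)=\E f(xX_\tau)=\sum_g\eta(g)f(xg)=P_\eta f(x).
\]
Hence $f\in\HH(G,\eta)$.

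For the boundary map, iterate the rule. Put $\tau_0=0$ and let $\tau_{k+1}=\tau_k+\tau\circ\theta_{\tau_k}$, where $\theta_{\tau_k}$ restarts the rule at time $\tau_k$, relative to the current position, with fresh auxiliary randomness. By the strong Markov property of the increment sequence at stopping times of the enlarged filtration, the increments $X_{\tau_k}^{-1}X_{\tau_{k+1}}$ are i.i.d.\ with law $\eta$. So $Y_k=X_{\tau_k}$ is a realization of the $\eta$-walk coupled with the $\mu$-walk, and $\tau_k\to\infty$ because $\tau\ge1$.

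Now take $F\in L^\infty(B_\mu,\nu_\mu)$. The function $\Poi_\mu F$ is $\eta$-harmonic by the first part. By \eqref{eq:mart-limit}, $\Poi_\mu F(X_n)\to F(b_\mu)$ almost surely, so along the subsequence $\Poi_\mu F(Y_k)\to F(b_\mu)$. Applying the $\eta$-version of \eqref{eq:mart-limit} to the bounded $\eta$-harmonic function $\Poi_\mu F$, we get $\Poi_\mu F(Y_k)\to \widetilde F(b_\eta)$, where $\widetilde F=\Poi_\eta^{-1}\Poi_\mu F$. Therefore $F(b_\mu)=\widetilde F(b_\eta)$ almost surely. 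In particular $b_\mu$ is measurable, modulo null sets, with respect to the $\sigma$-algebra generated by $b_\eta$.

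I expect the main obstacle to be turning this $\sigma$-algebra inclusion into a point map $q$. The positive, unital, multiplicative-on-$L^\infty$ isometric embedding $\iota:F\mapsto\widetilde F$ from $L^\infty(\nu_\mu)$ into $L^\infty(\nu_\eta)$ needs a pointwise model. Multiplicativity holds because $F\mapsto F(b_\mu)$ is multiplicative and $\widetilde F(b_\eta)=F(b_\mu)$. Since both spaces are standard, Mackey's point realization (or choosing a countable generating algebra of Borel sets) produces a measurable $q:B_\eta\to B_\mu$ with $\iota F=F\circ q$ and $q_*\nu_\eta=\nu_\mu$; the latter is read off from the laws of $b_\eta$ and $b_\mu$ at $e$. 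For equivariance, use the left-equivariance of the Poisson transforms, $\Poi(F\circ g^{-1})(x)=\Poi F(g^{-1}x)$, together with the commutation of $\iota$ with left translation. Then $q(gb)=gq(b)$ for $\nu_\eta$-almost every $b$, for each of the countably many $g$. Quasi-invariance lets us modify $q$ on a $G$-invariant null set to obtain strict equivariance. Canonicity holds because $q$ is determined by $\iota$, which depends only on the harmonic-function inclusion.

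Compatibility with successive stopping is then immediate. If $\eta=\mu_\tau$ and $\zeta=\eta_\sigma$, the composed rule realizes $\zeta$ from $\mu$. The composite of the inclusions $\HH(G,\mu)\subseteq\HH(G,\eta)\subseteq\HH(G,\zeta)$ is the inclusion $\HH(G,\mu)\subseteq\HH(G,\zeta)$. So the corresponding $L^\infty$ embeddings compose, and by uniqueness of the point realization modulo null sets, $q_{\zeta\to\mu}=q_{\eta\to\mu}\circ q_{\zeta\to\eta}$ almost everywhere.
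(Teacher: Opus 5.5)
Your proposal is correct and follows essentially the same route as the paper. It uses optional stopping for the inclusion and the iterated-stopping coupling to get $\Poi_\eta^{-1}\Poi_\mu F(b_\eta)=F(b_\mu)$. It then takes a point realization of the resulting positive, multiplicative, integral-preserving embedding (you cite Mackey where the paper codes refining partitions by hand) and derives equivariance from the intertwining of the Poisson transforms. Your compatibility argument via $\Poi_\zeta^{-1}\Poi_\mu=(\Poi_\zeta^{-1}\Poi_\eta)(\Poi_\eta^{-1}\Poi_\mu)$ is a slightly cleaner version of the paper's nested-subsequence argument. The only quibble is that strict equivariance comes from restricting to the conull invariant set, not from redefining $q$ on its complement; the statement only needs equivariance modulo null sets anyway.
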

\begin{proof}
For $f\in\HH(G,\mu)$ and $x\in G$, the process $f(xX_n)$ is a bounded martingale, also in the enlarged filtration. Optional stopping at $\tau\wedge n$ and bounded convergence give
\[
f(x)=\lim_n\E f(xX_{\tau\wedge n})=\E f(xX_\tau)=P_\eta f(x).
\]

Iterate the rule and let $T_0=0<T_1<T_2<\cdots$ be the successive terminal times. The blocks of increments, with their auxiliary variables, are independent and identically distributed. In particular, $Y_k=X_{T_k}$ is an $\eta$-walk and $T_k\geq k$. Set
\[
J=\Poi_\eta^{-1}\Poi_\mu.
\]
Applying \eqref{eq:mart-limit} to the original walk and to the sampled walk gives, in this coupling,
\begin{equation}\label{eq:coupled-factor}
JF(b_\eta)=\lim_k\Poi_\mu F(X_{T_k})=F(b_\mu).
\end{equation}
For bounded $F_1,F_2$, use this identity for $F_1$, $F_2$, and $F_1F_2$. It follows that $J(F_1F_2)=JF_1\,JF_2$ almost everywhere. Moreover, $J1=1$, $J$ is positive, and
\[
\int JF\,d\nu_\eta=\Poi_\mu F(e)=\int F\,d\nu_\mu.
\]
If $0\leq F_n\uparrow F$ and the sequence is bounded, positivity and equality of integrals imply $JF_n\uparrow JF$ almost everywhere.

These properties determine a measurable point map. Choose refining finite measurable partitions of $B_\mu$ which generate its sigma-algebra and separate points in a standard Borel model. The indicators of their atoms are sent by $J$ to indicators, with disjointness, unions, and refinement preserved. Outside one null set, a point of $B_\eta$ thus determines a consistent sequence of atom labels. Its distribution is the coding distribution of $\nu_\mu$, because $J$ preserves integrals. The labels therefore belong almost surely to the Borel coding image of $B_\mu$. The measurable inverse of this coding defines $q$ and gives $JF=F\circ q$. Integral preservation gives $q_*\nu_\eta=\nu_\mu$.

The Poisson transforms intertwine the left $G$-actions, and hence so does $J$. A countable generating family and the countability of $G$ imply that $q$ is equivariant modulo a common null set. If a second stopping rule is applied, the three martingale limits agree along the nested terminal subsequences. Their pullbacks, and hence their factor maps, compose.
\end{proof}

\section{Green potentials and randomized stopping distributions}\label{sec:potential}
\subsection{A potential criterion}
Assume in this section that the $\mu$-walk is transient, so its Green measure
\[
U(x)=\sum_{n\geq0}\mu^{*n}(x)
\]
is finite at every $x\in G$. The total mass of $U$ is infinite. All inequalities between Green measures below are pointwise.

\begin{theorem}\label{thm:potential}
A probability measure $\eta$ is the law of $X_\tau$ for an almost surely finite randomized stopping time $\tau\geq0$ if and only if
\begin{equation}\label{eq:pot-zero}
\eta*U\leq U.
\end{equation}
Such a realization with $\tau\geq1$ exists if and only if
\begin{equation}\label{eq:pot-positive}
\eta*U\leq\mu*U=U-\delta_e.
\end{equation}
\end{theorem}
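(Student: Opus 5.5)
Necessity comes from counting visits. Fix a randomized stopping time $\tau\geq0$ with $X_\tau\sim\eta$, and write $V_\tau(x)=\E\#\{n<\tau: X_n=x\}$ for the expected number of visits to $x$ strictly before $\tau$. Split the visits of the whole walk at time $\tau$. The post-$\tau$ increments are i.i.d.\ with law $\mu$ and independent of $\mathcal F_\tau$, also in the enlarged filtration, so the strong Markov property at $\tau$ gives the balayage identity
\[
U=V_\tau+\eta*U .
\]
Since $V_\tau\geq0$, this yields \eqref{eq:pot-zero}. When $\tau\geq1$, time $0$ is always counted, so $V_\tau\geq\delta_e$ and $\eta*U\leq U-\delta_e$. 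The equality $U-\delta_e=\mu*U$ is just $U=\delta_e+\mu*U$. Summing the identity over $x$ also gives the mean formula $\E\tau=\sum_x V_\tau(x)=\sum_x(U-\eta*U)(x)$ stated in Theorem~A. Both sides are finite or infinite together, since all terms are nonnegative, and this explains why the mean is determined by the terminal law.

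Sufficiency is by a state-dependent killing rule. Assume \eqref{eq:pot-zero} and set $V=U-\eta*U\geq0$. Then
\[
V-\mu*V=\delta_e-\eta\quad\text{pointwise},
\]
because $\mu*U=U-\delta_e$ and convolution is associative for nonnegative measures. The manipulation $\eta*\mu*U=\eta*U-\eta$ is legitimate pointwise because $\eta*U$ is finite. We want a rule that, on arriving at $x$, stops with probability $\eta(x)/(V(x)+\eta(x))$ and otherwise continues. If the rule is defined as a Markov killing with these weights (conventions: continue when $V(x)+\eta(x)=0$), the expected visit measure $W$ of the killed walk is the minimal nonnegative solution of
\[
W(x)=\delta_e(x)+\sum_y W(y)\Bigl(1-\tfrac{\eta(y)}{V(y)+\eta(y)}\Bigr)\mu(y^{-1}x),
\]
and the stopping mass at $x$ is $W(x)\eta(x)/(V(x)+\eta(x))$.

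A cleaner parametrisation is $\tilde W=V+\eta$, which is an exact solution with continuation mass $V$: indeed $\delta_e+\mu*V=V+\eta$. Then $W\leq\tilde W$ by minimality, and the stopped mass satisfies $\rho\leq\eta$. To show $\rho=\eta$ and $\tau<\infty$ almost surely, let $D=\tilde W-W\geq0$. Then $D=(\text{continuation of }D)*\mu$, i.e.\ $D$ is excessive for the killed kernel, and the defect in total mass equals $1-\rho(G)$ plus the mass escaping to infinity.

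The \textbf{main obstacle} is this step: ruling out loss of mass to infinity or a gap $\rho\neq\eta$. Transience gives $U<\infty$, and the plan is to use a Riesz decomposition. The difference $D$ is a nonnegative solution of the homogeneous equation dominated by $V+\eta\leq U+\eta$. I would show that such a dominated harmonic measure for the killed dual chain must vanish, using that $V$ is a potential (namely $V=(\delta_e-\eta)*U$ in the Riesz sense) and that $U$ has no harmonic part. Concretely, one iterates $D\leq (\text{kernel})^n D\leq\mu^{*n}*\tilde W$-type bounds and lets $n\to\infty$, using $\sum_{k\geq n}\mu^{*k}\to0$ pointwise.

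If that becomes delicate, a fallback is to approximate $\eta$ by truncations $\eta_N=\eta|_{F_N}$ on finite sets, run the rule, and pass to a limit by monotonicity. The rule stated with the weights $\eta(x)/(V(x)+\eta(x))$ makes $\rho=\eta$ once $W=\tilde W$ is known, and $\tau<\infty$ follows from $\rho(G)=1$.

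For the positive version, assume \eqref{eq:pot-positive}. Then $V\geq\delta_e$, so at time $0$ we continue deterministically and the first stopping can occur only at $\tau\geq1$. Equivalently, apply the time-zero construction to $\eta$ relative to the walk started after one step: $\eta*U\leq\mu*U$ says $\eta$ is a time-$\geq0$ stopping law for the walk with initial law $\mu$, which is the same argument with $\delta_e$ replaced by $\mu$.
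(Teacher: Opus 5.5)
Your proposal is essentially the paper's proof. Necessity is the occupation identity $U=V_\tau+\eta*U$. Sufficiency is the killing rule with weights $\eta/(V+\eta)$: its minimal arrival measure $W$ is compared with the exact solution $\tilde W=V+\eta\le U$ (recall $\eta*U\geq\eta$), and the defect is removed precisely by your ``concrete'' iteration $D=DQ^n\le\tilde W*\mu^{*n}\le\sum_{k\ge n}\mu^{*k}\to0$, so neither a Riesz decomposition nor the truncation fallback is needed. For $\tau\ge1$ the paper likewise forces the first step and uses initial law $\mu$, i.e.\ continuation measure $\mu*U-\eta*U=V-\delta_e$ and weights $\eta/(V-\delta_e+\eta)$.

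Three small corrections. First, the walk moves by right convolution, so write $V*\mu=V-\delta_e+\eta$ and $\tilde W*\mu^{*n}$ rather than $\mu*V$ and $\mu^{*n}*\tilde W$; the left-convolution identity fails for nonabelian $G$, although your justification via $\eta*\mu*U=\eta*U-\eta$ is the right-convolution computation. Second, the iteration uses the exact equality $D=DQ$, which holds because $W$ and $\tilde W$ both solve the inhomogeneous equation exactly; mere excessivity would only give $DQ^n\le D$. Third, in the positive case only your second formulation is valid, since the time-zero weights kill at time $0$ with probability $\eta(e)/(V(e)+\eta(e))>0$ whenever $\eta(e)>0$.
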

\begin{proof}
Suppose first that $\tau$ is given. The expected occupation measure before stopping is
\[
A(x)=\E\sum_{j=0}^{\tau-1}\one_{\{X_j=x\}}.
\]
Split the visits into those before $\tau$ and those at or after $\tau$. The strong Markov property and Tonelli's theorem give
\begin{equation}\label{eq:occupation}
U(x)=A(x)+\sum_y\eta(y)\sum_{n\geq0}\mu^{*n}(y^{-1}x)
=A(x)+(\eta*U)(x).
\end{equation}
This proves \eqref{eq:pot-zero}. If $\tau\geq1$, then $A\geq\delta_e$, which gives \eqref{eq:pot-positive}.

Now assume \eqref{eq:pot-zero}. Define the nonnegative measures
\[
A=U-\eta*U,\qquad M=A+\eta.
\]
These differences are finite at each coordinate. Since $U*\mu=U-\delta_e$, convolution gives
\[
A*\mu=A-\delta_e+\eta,\qquad M=\delta_e+A*\mu.
\]
Also $\eta*U\geq\eta$, and hence $M\leq U$. At every arrival at a state $x$ before stopping, kill the walk with probability
\[
q(x)=\frac{\eta(x)}{M(x)},\qquad
r(x)=1-q(x)=\frac{A(x)}{M(x)}
\quad(M(x)>0).
\]
Set $q(x)=1$ when $M(x)=0$. Independent coins are used at successive arrivals. Write
\[
P(x,y)=\mu(x^{-1}y),\qquad Q(x,y)=r(x)P(x,y).
\]
Then $MQ=A*\mu$, so $M=\delta_e+MQ$.

The actual total arrival measure before killing, with the killing arrival included, is
\[
v=\sum_{n\geq0}\delta_eQ^n.
\]
Iteration of the equation for $M$ shows that $v\leq M$ and $v=\delta_e+vQ$. Thus $D=M-v$ is nonnegative and satisfies $D=DQ$. For every $n$,
\[
0\leq D=DQ^n\leq UP^n=\sum_{j\geq n}\mu^{*j}.
\]
The last expression tends to zero at every coordinate by transience. Therefore $v=M$. The probability of being killed at $x$ is $v(x)q(x)=\eta(x)$. Summing over $x$ shows that killing occurs almost surely and has terminal law $\eta$.

Under \eqref{eq:pot-positive}, force the first step and apply the same construction with initial distribution $\mu$. Explicitly, put
\[
A_+=\mu*U-\eta*U,\qquad M_+=A_++\eta=\mu+A_+*\mu.
\]
Here $0\leq M_+\leq\mu*U\leq U$. With killing probability $\eta(x)/M_+(x)$, the arrival measure is $v_+=\sum_{n\geq0}\mu Q^n$. The same argument gives $v_+=M_+$ and terminal law $\eta$. The first possible killing time is one.
\end{proof}

\begin{figure}[tb]
\centering
\begin{tikzpicture}[>=Stealth, every node/.style={font=\small},node distance=16mm]
\node[draw,rounded corners,minimum width=27mm,minimum height=10mm] (M) {arrivals: $M$};
\node[draw,rounded corners,below left=13mm and 20mm of M,minimum width=24mm,minimum height=9mm] (eta) {stop: $\eta$};
\node[draw,rounded corners,below right=13mm and 20mm of M,minimum width=24mm,minimum height=9mm] (A) {continue: $A$};
\draw[->] (M) -- node[above left] {$q$} (eta);
\draw[->] (M) -- node[above right] {$r$} (A);
\draw[->] (A.east) -- ++(13mm,0) |- node[pos=.2,right] {$P$} (M.east);
\draw[->] ([xshift=-25mm]M.west) -- node[above] {$\delta_e$} (M.west);
\node[below=9mm of $(eta.south)!.5!(A.south)$] {$M=\eta+A=\delta_e+A*\mu$};
\end{tikzpicture}
\caption{Mass balance for the killing construction. Transience makes the nonnegative remainder $M-v$ vanish, so the prescribed and actual arrival measures agree.}
\label{fig:killing}
\end{figure}

\begin{corollary}\label{cor:mean-potential}
Every ordinary or randomized embedding of $\eta$ in a transient $\mu$-walk satisfies the extended-value identity
\begin{equation}\label{eq:potential-mean}
\E\tau=\sum_{x\in G}\bigl(U(x)-(\eta*U)(x)\bigr).
\end{equation}
\end{corollary}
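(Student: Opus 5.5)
The plan is to sum the occupation identity \eqref{eq:occupation} from the proof of Theorem~\ref{thm:potential} over $x\in G$. Given any embedding $\tau$ of $\eta$, ordinary or randomized, that proof introduces the expected pre-stopping occupation measure
\[
A(x)=\E\sum_{j=0}^{\tau-1}\one_{\{X_j=x\}}.
\]
Its derivation uses only the strong Markov property at $\tau$, in the enlarged filtration where future increments keep their law, together with Tonelli. So it applies verbatim to randomized rules and gives $U(x)=A(x)+(\eta*U)(x)$ with every term finite, by transience. Hence $A(x)=U(x)-(\eta*U)(x)$ pointwise, and this difference is a well-defined nonnegative finite number at each $x$.

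Next I would compute the total mass of $A$. Tonelli lets me exchange the sum over $x$ with the expectation and the sum over $j$, since all terms are nonnegative:
\[
\sum_{x\in G}A(x)=\E\sum_{j=0}^{\tau-1}\sum_{x}\one_{\{X_j=x\}}=\E\sum_{j=0}^{\tau-1}1=\E\tau.
\]
This holds as an identity in $[0,\infty]$. Combining it with the pointwise formula for $A$ gives \eqref{eq:potential-mean} in the extended-value sense. The right side is a sum of nonnegative terms, so it is well defined even when it diverges.

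The only step needing care is one I must avoid: writing the right side as $\sum U-\sum\eta*U$. Both of those total masses are infinite, so that splitting would be meaningless. I will keep the difference inside the sum and argue coordinatewise, so the identity never involves $\infty-\infty$.

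Finally, I would remark that the right side depends only on $\eta$ and $\mu$. Therefore every embedding of a given $\eta$ has the same mean stopping time, which is the claim in Theorem~A. If time zero is allowed ($\tau\geq0$), the same argument applies unchanged.
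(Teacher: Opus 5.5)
Your proposal is correct and follows the paper's proof exactly: you identify each summand with the pre-stopping occupation $A(x)$ via \eqref{eq:occupation}, and then sum over $x$ using Tonelli to obtain $\E\tau$. Your point about keeping the difference inside the sum, so that no $\infty-\infty$ occurs, matches the paper's remark that the sum is formed from finite nonnegative coordinate differences.
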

\begin{proof}
By \eqref{eq:occupation}, the summand is $\E\sum_{j<\tau}\one_{\{X_j=x\}}$. Tonelli's theorem gives
\[
\sum_x A(x)=\E\sum_{j<\tau}\sum_x\one_{\{X_j=x\}}=\E\tau.
\]
\end{proof}

The sum in \eqref{eq:potential-mean} is formed from finite nonnegative coordinate differences. In particular, the expected duration cannot be changed by replacing one embedding of a given target with another.

\subsection{Mixtures and a return-probability obstruction}
\begin{proposition}\label{prop:mixture}
Let $0<t<1$ and $\eta_t=t\mu+(1-t)\beta$. For a transient source $\mu$, the measure $\eta_t$ is a positive-time randomized stopping distribution if and only if $\beta$ is one.
\end{proposition}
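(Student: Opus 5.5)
The plan is to reduce everything to the positive-time potential criterion \eqref{eq:pot-positive} of Theorem~\ref{thm:potential}. Convolution is linear in the first argument, so
\[
\eta_t*U=t\,\mu*U+(1-t)\,\beta*U .
\]
By Theorem~\ref{thm:potential}, $\eta_t$ is a positive-time randomized stopping distribution if and only if $\eta_t*U\leq\mu*U$ pointwise. Substituting the expansion, this becomes
\[
t\,\mu*U+(1-t)\,\beta*U\leq\mu*U,
\]
which is equivalent to
\[
(1-t)\,\beta*U\leq(1-t)\,\mu*U .
\]

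The one point needing care is cancellation. Each coordinate $(\mu*U)(x)=U(x)-\delta_e(x)$ is finite by transience. Each coordinate of $\beta*U$ is a possibly infinite nonnegative sum. If $(\beta*U)(x)=\infty$ at some $x$, then both inequalities fail at that $x$, since the right-hand sides are finite. So the equivalence is consistent in that case. Where everything is finite, subtracting $t\,\mu*U(x)$ and dividing by $1-t>0$ is legitimate. Hence $\eta_t$ satisfies \eqref{eq:pot-positive} exactly when $\beta*U\leq\mu*U$, that is, exactly when $\beta$ satisfies \eqref{eq:pot-positive}. A second application of Theorem~\ref{thm:potential} then gives the stated equivalence.

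As a sanity check, the direction from $\beta$ to $\eta_t$ can also be seen directly. Flip an independent coin with success probability $t$. On success, stop at time one. Otherwise, run a positive-time embedding of $\beta$. The converse direction is where the potential criterion is genuinely needed: a rule for $\eta_t$ does not visibly decompose into one for $\beta$. The main obstacle is therefore only the bookkeeping with possibly infinite coordinates described above.
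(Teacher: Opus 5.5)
Your proof is correct and follows the paper's argument: you reduce to the positive-time criterion $\eta*U\leq\mu*U$ of Theorem~\ref{thm:potential}, expand by linearity, and cancel coordinatewise using finiteness of $(\mu*U)(x)$, noting that an infinite value of $(\beta*U)(x)$ violates both inequalities. Your direct coin-flip construction for the implication from $\beta$ to $\eta_t$ is a valid extra check that the paper does not include.
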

\begin{proof}
Theorem~\ref{thm:potential} gives the equivalent inequalities
\[
t\mu*U+(1-t)\beta*U\leq\mu*U
\quad\Longleftrightarrow\quad
\beta*U\leq\mu*U.
\]
Since $(\mu*U)(x)$ is finite, the cancellation is valid at each coordinate; an infinite value of $\beta*U(x)$ violates both inequalities.
\end{proof}

Let $T_e$ be the first hitting time of $e$, allowing time zero, and let $T_e^+$ be the first positive return time. For a generating transient walk, set
\[
f(g)=\Prob_g(T_e<\infty),\qquad r_\mu=\Prob_e(T_e^+<\infty).
\]
The first-visit decomposition gives
\[
f(g)=\frac{U(g^{-1})}{U(e)},\qquad r_\mu=1-\frac1{U(e)}.
\]
Evaluating \eqref{eq:pot-positive} at $e$ yields the necessary condition
\begin{equation}\label{eq:return-test}
\sum_g\eta(g)f(g)\leq r_\mu,
\qquad\text{and hence}\qquad \eta(e)\leq r_\mu<1.
\end{equation}

The next application separates the existence of finite-entropy Liouville mixtures from their realization by stopping.

\begin{proposition}\label{prop:ff-obstruction}
On the lamplighter group $G=C_2\wr\Z$, there exist symmetric, fully supported Liouville measures $\alpha,\beta$ of finite entropy and $0<t<1$ such that
\[
\eta=t\alpha+(1-t)\beta
\]
is non-Liouville but is not a positive-time randomized stopping distribution of the $\alpha$-walk. It is not an ordinary stopping distribution even if time zero is allowed.
\end{proposition}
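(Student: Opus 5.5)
The plan is to start from the Forghani--Frisch construction \cite{ForghaniFrisch2026}. It supplies symmetric, fully supported Liouville measures $\alpha,\beta$ of finite entropy on $C_2\wr\Z$ and some $0<t<1$ for which $t\alpha+(1-t)\beta$ is non-Liouville. We keep the freedom to modify these measures as long as Liouvilleness, symmetry, full support and finite entropy survive. The obstruction will come from the return-probability test \eqref{eq:return-test}. First, every Liouville measure on an infinite group gives a transient walk or a recurrent one; on $C_2\wr\Z$, which has exponential growth, a symmetric fully supported walk is transient by Varopoulos' theorem. So Theorem~\ref{thm:potential} applies to the $\alpha$-walk.

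By \eqref{eq:return-test}, a positive-time randomized realization of $\eta$ forces $\eta(e)\le r_\alpha<1$, and more generally $\sum_g\eta(g)f_\alpha(g)\le r_\alpha$. The key step is to arrange that this inequality fails. The simplest route is to make $\eta(e)$ large. Replace $\alpha,\beta$ by lazy versions $\alpha'=s\delta_e+(1-s)\alpha$ and $\beta'=s'\delta_e+(1-s')\beta$. Lazy versions have the same bounded harmonic functions, so they are still Liouville, symmetric, of finite entropy and fully supported. The mixture becomes $t\alpha'+(1-t)\beta'=c\delta_e+(1-c)\eta_0$ with $\eta_0$ the original mixture up to reweighting, so it is still non-Liouville. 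We must watch that $r_{\alpha'}$ also grows with laziness: in fact $U_{\alpha'}=U_\alpha/(1-s)$, so $r_{\alpha'}=1-(1-s)/U_\alpha(e)$. The cleanest choice is therefore to keep $\alpha$ fixed and put all the laziness into $\beta$, taking $s'$ close to $1$. Then $\eta(e)\ge(1-t)s'$.

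The mixture weight is not fixed. The statement only asks for some $t$, and non-Liouvilleness of $t\alpha+(1-t)\beta'$ is equivalent to non-Liouvilleness of a reweighted mixture of $\alpha,\beta$. This is because $t\alpha+(1-t)\beta'=(1-(1-t)s')\,\eta''+(1-t)s'\delta_e$ with $\eta''$ a mixture of $\alpha$ and $\beta$ with weights proportional to $t$ and $(1-t)(1-s')$. The condition $\eta(e)>r_\alpha$ then becomes $(1-t)s'>r_\alpha$, while the effective weight ratio $t:(1-t)(1-s')$ must stay equal to the ratio $t_0:(1-t_0)$ from the Forghani--Frisch example. Solving, we need $t$ small and $s'$ close to $1$ with $t/((1-t)(1-s'))$ fixed, which is achievable. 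This gives $\eta(e)\to1>r_\alpha$, so \eqref{eq:return-test} fails and $\eta$ is not a positive-time randomized stopping distribution of $\alpha$.

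For the last claim, an ordinary stopping time with time zero allowed is, because $\F_0$ is trivial, either $\tau=0$ almost surely or $\tau\ge1$ almost surely. The first case gives $\eta=\delta_e$, which is impossible since $\eta$ is non-Liouville. The second case is excluded by the previous step. The main obstacle I expect is the bookkeeping in the weight-matching step: non-Liouvilleness must be preserved exactly under the lazy modification, and we need the identity $\HH(G,c\delta_e+(1-c)\eta'')=\HH(G,\eta'')$, which is immediate from $P=cI+(1-c)P_{\eta''}$.
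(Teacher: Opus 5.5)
Your argument is correct and is essentially the paper's. You keep $\alpha$ fixed, put the holding only into $\beta$, push $\eta(e)$ above $r_\alpha$ so that \eqref{eq:return-test} fails, and handle ordinary rules with time zero through the triviality of $\F_0$.

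There are two modest differences. For transience, the paper is self-contained: it embeds a subdivided binary tree in the Cayley graph of $\{a,s,s^{-1}\}$ and uses a finite-energy flow. You cite Varopoulos, which is fine. In its usual form, though, that theorem concerns simple random walk on a Cayley graph, so for an infinitely supported $\alpha$ you still need the conductance comparison the paper uses (symmetry of $\alpha$ and positivity on a finite generating set). For non-Liouvilleness, the paper invokes the version of Forghani--Frisch in which every nontrivial convex combination of $\lambda_0,\lambda_1$ is non-Liouville, so any small $t,\varepsilon$ work. Your matching condition $t/((1-t)(1-s'))=t_0/(1-t_0)$ needs only one non-Liouville weight $t_0$. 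That is slightly more robust to which form of their theorem is cited, and it costs only the bookkeeping you did, which is correct.

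One justification is wrong, although the conclusion stands. You rule out $\eta=\delta_e$ ``since $\eta$ is non-Liouville.'' But $P_{\delta_e}$ is the identity, so $\HH(G,\delta_e)=\ell^\infty(G)$, and $\delta_e$ is itself non-Liouville in the paper's sense. The correct reason is that $\eta\geq t\alpha$ with $t>0$ and $\alpha$ fully supported, so $\eta\neq\delta_e$; this is what the paper uses.
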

\begin{proof}
The finite-entropy theorem of Forghani and Frisch \cite[Theorem B]{ForghaniFrisch2026}, applied to $C_2\wr\Z$, supplies symmetric, fully supported finite-entropy Liouville measures $\lambda_0,\lambda_1$ such that every nontrivial convex mixture of them is non-Liouville.

The $\lambda_0$-walk is transient. To verify this, let $a$ toggle the lamp at the current position and let $s$ move the position one unit to the right. The Cayley graph for $\{a,s,s^{-1}\}$ contains a subdivided binary tree: at level $n$, either move right, or first toggle the current lamp and then move right. The lamps left behind record the binary choices. Intermediate toggle vertices have a lit lamp at the current position, whereas level vertices have not yet modified that position. Thus distinct tree vertices remain distinct in the Cayley graph, and each tree edge has length at most two. Sending equal flow into both branches gives finite energy, bounded by a constant times $\sum_{n\geq1}2^{-n}$. The finite-energy flow criterion \cite[Theorem 2.11]{LyonsPeres2016} proves transience of the simple walk. Since $\lambda_0$ is symmetric and positive on the three generators, its conductances dominate a positive multiple of these Cayley-graph conductances. The same flow proves its transience.

Put $r=r_{\lambda_0}<1$ and define
\[
\alpha=\lambda_0,\qquad
\beta=(1-\varepsilon)\delta_e+\varepsilon\lambda_1,
\qquad 0<\varepsilon<1.
\]
Adding a holding probability preserves bounded harmonic functions, since
\[
f=((1-\varepsilon)I+\varepsilon P_{\lambda_1})f
\quad\Longleftrightarrow\quad f=P_{\lambda_1}f.
\]
Symmetry, full support, and finite entropy are preserved as well. Writing $c=(1-t)(1-\varepsilon)$, we have
\[
\eta=c\delta_e+(1-c)\zeta,\qquad
\zeta=\frac{t}{t+(1-t)\varepsilon}\lambda_0+
\frac{(1-t)\varepsilon}{t+(1-t)\varepsilon}\lambda_1.
\]
Both coefficients of $\zeta$ are positive. Thus $\zeta$, and hence $\eta$, is non-Liouville. Choose $t,\varepsilon>0$ small enough that $\eta(e)\geq c>r$. This contradicts \eqref{eq:return-test} for a positive-time embedding. Finally, an ordinary rule can stop at time zero with positive probability only by stopping there almost surely, because $\F_0$ is trivial. Its terminal law would then be $\delta_e$, whereas $\eta\neq\delta_e$.
\end{proof}

\section{Ordinary stopping and prefix constraints}\label{sec:prefix}
\subsection{Complete stopping codes}
Let $A=\supp\mu$. For a nonempty finite word $w=(a_1,\ldots,a_n)$ over $A$, put
\[
p(w)=\prod_{j=1}^n\mu(a_j),\qquad \pi(w)=a_1\cdots a_n.
\]
A collection of finite words is \emph{prefix-free} if none of its elements is a proper prefix of another.

\begin{proposition}\label{prop:prefix}
A probability measure $\eta$ is an ordinary positive-time stopping distribution if and only if there is a prefix-free collection
$\code\subseteq\bigcup_{n\geq1}A^n$ such that
\begin{equation}\label{eq:prefix-law}
\sum_{w\in\code}p(w)=1,\qquad
\eta(g)=\sum_{\substack{w\in\code\\\pi(w)=g}}p(w)
\quad(g\in G).
\end{equation}
\end{proposition}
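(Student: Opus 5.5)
The plan is to translate between ordinary stopping times and sets of finite increment words. The key fact is that $\F_n=\sigma(g_1,\ldots,g_n)$ and $\F_0$ is trivial. Hence an event in $\F_n$ is, up to null sets, a union of cylinders $\{(g_1,\ldots,g_n)=w\}$ with $w\in A^n$. Almost surely every increment lies in $A=\supp\mu$, since $G$ is countable, so we may restrict attention to words over $A$.

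For the forward direction, let $\tau\geq1$ be an ordinary, almost surely finite stopping time with $\mu_\tau=\eta$. Define
\[
\code=\{w\in A^n:\ n\geq1,\ \tau=n \text{ on the cylinder of } w\}.
\]
Since $\{\tau=n\}\in\F_n$, and each cylinder with $p(w)>0$ is an atom of $\F_n$ restricted to $A$-valued paths, each such cylinder lies entirely inside or entirely outside $\{\tau=n\}$. Removing the null set of paths leaving $A$ changes nothing. The collection $\code$ is prefix-free: if $w$ were a proper prefix of $w'$ with both in $\code$, then on the cylinder of $w'$ we would have both $\tau=|w|$ and $\tau=|w'|$. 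The cylinders of $\code$ are disjoint and their union is $\{\tau<\infty\}$ up to a null set, which gives $\sum_{w\in\code}p(w)=1$. On the cylinder of $w$ we have $X_\tau=\pi(w)$, and summing over $w$ gives \eqref{eq:prefix-law}.

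For the converse, let $\code$ be prefix-free with total weight one, and define $\tau=n$ if $(g_1,\ldots,g_n)\in\code$. Prefix-freeness makes this well defined, because at most one prefix of a given path lies in $\code$. If no prefix lies in $\code$, set $\tau=\infty$. The event $\{\tau=n\}$ is determined by $g_1,\ldots,g_n$, so $\tau$ is an ordinary stopping time, and $\tau\geq1$ because every word in $\code$ is nonempty. The cylinders are disjoint with total mass $1$, so $\tau<\infty$ almost surely. The law of $X_\tau$ is then given by \eqref{eq:prefix-law}.

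The only delicate point is the measure-theoretic claim in the forward direction, that $\{\tau=n\}$ is a union of whole cylinders rather than merely agreeing with one up to null sets. This holds because a $\sigma(g_1,\ldots,g_n)$-measurable set has the form $\{(g_1,\ldots,g_n)\in S\}$ for some $S\subseteq G^n$, and every cylinder with $p(w)>0$ has positive mass. This is exactly where ordinary stopping differs from randomized stopping, since randomization would allow a cylinder to be split.
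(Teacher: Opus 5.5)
Your proof is correct and follows the paper's own argument: you decompose each $\{\tau=n\}\in\F_n$ into whole length-$n$ cylinders of words over $A$, collect these words into a prefix-free code of total weight one, and conversely stop at the unique prefix lying in $\code$. Your extra bookkeeping about positive-mass cylinders being atoms and about null paths leaving $A$ plays the role of the paper's one-line remark that completing the filtration changes only null sets.
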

\begin{proof}
For an ordinary stopping time $\tau$, the event $\{\tau=n\}\in\F_n$ is a union of length-$n$ increment cylinders. Collect their words in $\code$. If a cylinder stops at time $n$, none of its extensions can be a first stopping cylinder. Hence $\code$ is prefix-free. Its cylinders are disjoint, each has probability $p(w)$, and their union has probability one exactly when $\tau<\infty$ almost surely. This proves \eqref{eq:prefix-law}. Completion of the filtration changes only null sets.

Conversely, stop when the observed word first belongs to $\code$. This is a stopping time. The first equation in \eqref{eq:prefix-law} gives almost sure finiteness, and the second gives the terminal law.
\end{proof}

For a randomized rule, let $a(w)$ be the probability of reaching a word $w$ without having stopped earlier, and let $t(w)$ be the mass stopped there. Then
\[
0\leq t(w)\leq a(w),\qquad a(ws)=(a(w)-t(w))\mu(s).
\]
Ordinary stopping imposes the additional condition $t(w)\in\{0,a(w)\}$. This constraint is retained even when the source has a holding probability.

\subsection{A uniform separation on free groups}
\begin{theorem}\label{thm:free}
Let $\mu=q^{-1}\sum_{s\in S}\delta_s$ be the simple random walk measure on $F_d$, where $d\geq2$, $S$ is the symmetric free generating set, and $q=2d$. For $2/q<p<1$, set $\eta=p\mu+(1-p)\mu^{*2}$. Then $\eta$ is a randomized stopping distribution, and every ordinary positive-time stopping distribution $\rho$ satisfies
\begin{equation}\label{eq:free-gap}
\TV(\rho,\eta)\geq\min\left\{\frac{1-p}{q},\ p-\frac2q\right\}.
\end{equation}
\end{theorem}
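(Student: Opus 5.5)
The randomized half of Theorem~\ref{thm:free} is immediate. Take one auxiliary uniform variable $V$ and stop at time $1$ if $V<p$ and at time $2$ otherwise. Then $V$ is independent of the increments, so $\Law(X_\tau)=p\mu+(1-p)\mu^{*2}=\eta$. As a check, one can also verify $\eta*U\le\mu*U$ directly from Theorem~\ref{thm:potential}.

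For the ordinary half, I would use Proposition~\ref{prop:prefix} and write $\rho$ through a complete prefix code $\code$ over $S$. First I record the values of $\eta$ from the reduced-length grading of $F_d$. On $F_d$ the measure $\mu^{*2}$ is supported on $\{e\}$ and on the reduced words of length two, so $\mu^{*2}(S)=0$. This gives
\[
\eta(s)=\frac pq\ (s\in S),\qquad \eta(e)=\frac{1-p}{q},\qquad \eta(st)=\frac{1-p}{q^2}\ (t\ne s^{-1}).
\]
Let $N\subseteq S$ be the set of letters $s$ for which the one-letter word $(s)$ is \emph{not} in $\code$, and put $m=|N|$. The ordinary constraint $t(w)\in\{0,a(w)\}$ at time one says precisely that each first-step cylinder is either stopped entirely or not at all.

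\emph{Case $m=0$.} Every one-letter word lies in $\code$, so by prefix-freeness $\code=S$ and $\rho=\mu$. Testing on $A=\{e\}$ gives $\TV(\rho,\eta)\ge\eta(e)-\rho(e)=(1-p)/q$.

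\emph{Case $m\ge1$.} Now $\rho$ loses at least one full first-step mass $1/q$ at time one. I would test on $A=S$, where $\eta(S)=p$. The stopped letters contribute exactly $(q-m)/q$ to $\rho(S)$. The remaining contribution comes from continuing paths that first step into $N$ and end in $S$ at an odd time $\ge3$. The aim is the estimate
\[
\rho(S)\le 1-\frac mq\ \text{(stopped letters)}+\Bigl(\text{returning mass}\Bigr)\quad\text{with}\quad \eta(S)-\rho(S)\ge p-\frac2q .
\]
Alternatively, if $\rho(S)$ exceeds $\eta(S)$, I would test on the complement, or on $\{e\}\cup S_2$, where $\eta$ has mass $1-p$. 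The goal is to show that one of these two test sets always yields $\min\{(1-p)/q,\,p-2/q\}$.

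The main obstacle is controlling this returning mass. A path starting with $s\in N$ can later be stopped on arrival in $S$, and a naive bound allows it to put up to $1/q$ there. I would first try a tree argument on the Cayley tree. From $s$, a path reaches $S$ either by passing through $e$, which happens with probability $1/(q-1)$ from distance one by the standard transience computation, or by ending at $s$ itself. Every path that returns to $S$ must go through $e$ or through a sphere of radius two with a fixed geometry. Combining this with the first-step structure should show that the $N$-cylinders together can add to $\rho(S)$ no more than $m/q-(p-2/q)$ below what would close the gap. If the set $S$ turns out not to work uniformly, I would refine the test set to $\{s\}\cup\{st:t\ne s^{-1}\}$ for a single $s\in N$, on which $\eta$ has mass $p/q+(1-p)(q-1)/q^2$. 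I would then compare it with the at most $1/q$ that the cylinder $(s)$ together with returning paths can deliver, using the same bound on passage through $e$. The final step is to take the minimum over the two cases, which gives \eqref{eq:free-gap}; this minimum is positive because $p>2/q$ and $p<1$.
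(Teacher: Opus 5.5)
Your randomized realization and your values of $\eta$ are correct. The ordinary half, however, has a genuine gap caused by the case split. Splitting on $m=0$ versus $m\geq1$ puts the mixed rules, where some one-letter words lie in $\code$ and others do not, into your second case, and none of your proposed test sets handles them.

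Here is a concrete failure. Take $d=2$ and $p=3/4$, so the target bound is $1/16$. Let the rule stop at time one on the three letters $a,a^{-1},b$ and at time two on the cylinder $b^{-1}$. Then $\rho(S)=3/4=\eta(S)$ and $\rho(\{e\}\cup S_2)=1/4=\eta(\{e\}\cup S_2)$. So $S$, its complement and $\{e\}\cup S_2$ all show zero discrepancy. Your refined set $\{b^{-1}\}\cup\{b^{-1}t:t\neq b\}$ gives only $15/64-12/64=3/64<1/16$. The ``at most $1/q$'' bound for that set is also unjustified, since paths such as $(t,t^{-1},s)$ from other cylinders can end at $s$. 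In any case $\eta$ of that set equals $1/q-(1-p)/q^2<1/q$, so an upper bound of $1/q$ on $\rho$ of that set cannot produce a positive gap.

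The missing idea is the observation you already use for $m=0$, applied to a single letter. Split instead on whether \emph{some} one-letter word $(s)$ lies in $\code$.
\begin{itemize}
\item If it does, the atom $\{G_1=s\}$ of $\F_1$ is stopped entirely, so $\rho(s)\geq1/q$ while $\eta(s)=p/q$. The test set $\{s\}$ then gives $(1-p)/q$ directly. In the example above, $\rho(a)-\eta(a)=1/4-3/16=1/16$.
\item Otherwise $m=q$, that is, $\tau\geq2$ almost surely. Here you do not need any cylinder-by-cylinder accounting of returning mass. The inclusion $\{X_\tau\in S\}\subseteq\{\exists n\geq2:X_n\in S\}$ bounds $\rho(S)$ by a probability that does not depend on the rule. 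At time two the walk is at $e$ with probability $1/q$, and otherwise at distance two, from which distance one is hit with probability $1/(q-1)$. So $\rho(S)\leq\Prob_e(\exists n\geq2:X_n\in S)=\frac1q+\frac{q-1}{q}\cdot\frac1{q-1}=\frac2q$. Testing on $S$ then gives $p-2/q$.
\end{itemize}
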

\begin{proof}
An independent coin choosing time one with probability $p$ and time two with probability $1-p$ realizes $\eta$.

Let $\rho=\mu_\tau$ for an ordinary rule. Every first-step cylinder $\{G_1=s\}$ is an atom of $\F_1$ of probability $1/q$. If the rule stops on one such cylinder, then $\rho(s)\geq1/q$. A word of length two in $S$ reduces to length zero or two, so $\eta(s)=p/q$. Therefore
\[
\TV(\rho,\eta)\geq\rho(s)-\eta(s)\geq\frac{1-p}{q}.
\]

If the rule stops on no first-step cylinder, then $\tau\geq2$ almost surely. Consequently,
\[
\rho(S)\leq\Prob_e(\exists n\geq2:X_n\in S).
\]
At time two, the walk is at $e$ with probability $1/q$ and at distance two with probability $(q-1)/q$. From distance two, the probability of ever reaching distance one is $1/(q-1)$. To obtain this value, let $u_R(r)$ be the probability for the radial chain, started at $r$, of hitting $1$ before $R$. The equations
\[
u_R(r)=q^{-1}u_R(r-1)+(q-1)q^{-1}u_R(r+1),\quad u_R(1)=1,\quad u_R(R)=0
\]
have the solution
\[
u_R(r)=\frac{(q-1)^{-(r-1)}-(q-1)^{-(R-1)}}{1-(q-1)^{-(R-1)}}.
\]
Letting $R\to\infty$ gives the stated hitting probability. A walk at $e$ reaches $S$ at the next step, so
\begin{equation}\label{eq:sphere-capacity}
\Prob_e(\exists n\geq2:X_n\in S)
=\frac1q+\frac{q-1}{q}\frac1{q-1}=\frac2q.
\end{equation}
Since $\eta(S)=p$, this case gives $\TV(\rho,\eta)\geq p-2/q$.
\end{proof}

\begin{figure}[tb]
\centering
\begin{tikzpicture}[>=Stealth,every node/.style={font=\small}]
\node[draw,rounded corners,minimum width=28mm,minimum height=9mm] (root) at (0,0) {ordinary rule};
\node[draw,rounded corners,align=center,text width=44mm] (yes) at (-3,-1.6) {some first-step cylinder stops\\$\rho(s)\geq1/q$};
\node[draw,rounded corners,align=center,text width=44mm] (no) at (3,-1.6) {no first-step cylinder stops\\$\tau\geq2$};
\node[align=center] (yg) at (-3,-3.1) {$\eta(s)=p/q$\\gap $\geq(1-p)/q$};
\node[align=center] (ng) at (3,-3.1) {$\rho(S)\leq2/q<\eta(S)=p$\\gap $\geq p-2/q$};
\draw[->] (root) -- (yes); \draw[->] (root) -- (no);
\draw[->] (yes) -- (yg); \draw[->] (no) -- (ng);
\end{tikzpicture}
\caption{The two exhaustive cases in Theorem~\ref{thm:free}. The first uses an indivisible cylinder; the second uses the probability of returning to the first sphere after time one.}
\label{fig:gap}
\end{figure}

For $F_2$ and $p=3/4$, the bound is $1/16$. Both $\mu$ and $\mu^{*2}$ are ordinary stopping distributions, so \eqref{eq:free-gap} proves both nonconvexity and failure of density.

\subsection{The obstruction persists with holding}
\begin{theorem}\label{thm:lazy}
With $F_d,q,\mu$ as in Theorem~\ref{thm:free}, assume
\[
\frac1{q+1}<a<\frac12,\qquad \frac1{q-1}<p<1,
\qquad b=1-a.
\]
Put $\mu_a=a\delta_e+b\mu$ and $\eta=p\mu_a+(1-p)\mu_a^{*2}$. Then $\eta$ is a randomized stopping distribution. For every ordinary positive-time stopping distribution $\rho$ of the $\mu_a$-walk,
\begin{equation}\label{eq:lazy-gap}
\TV(\rho,\eta)\geq
\min\left\{(1-p)b\left(a-\frac bq\right),\
\frac{(1-p)b(1-2a)}q,\
\frac{b^2((q-1)p-1)}q\right\}>0.
\end{equation}
\end{theorem}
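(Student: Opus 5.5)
The realization of $\eta$ is immediate. An independent coin stops the $\mu_a$-walk at time one with probability $p$ and at time two with probability $1-p$, and its terminal law is $p\mu_a+(1-p)\mu_a^{*2}$. For the lower bound I follow the case analysis of Theorem~\ref{thm:free}, using the atoms of $\F_1$. Let $\rho=\mu_\tau$ for an ordinary positive-time rule. The first increment is either $e$, on a cylinder of probability $a$, or some $s\in S$, on a cylinder of probability $b/q$. By Proposition~\ref{prop:prefix}, the rule either stops on a whole first-step cylinder or not at all. This gives three exhaustive cases: the rule stops on $\{G_1=e\}$; it stops on some $\{G_1=s\}$; or it stops on no first-step cylinder, in which case $\tau\geq2$. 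Each case yields one term of the minimum in \eqref{eq:lazy-gap}.

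I first compute $\eta$ on the relevant small sets. A word of two letters from $S$ reduces to $e$ with probability $1/q$ and otherwise has length two. Hence
\[
\mu_a^{*2}(e)=a^2+\frac{b^2}{q},\qquad \mu_a^{*2}(S)=2ab,\qquad \mu_a^{*2}(s)=\frac{2ab}{q}.
\]
\emph{Case 1.} We have $\rho(e)\geq a$, and $\eta(e)=pa+(1-p)(a^2+b^2/q)$. Therefore $\rho(e)-\eta(e)\geq(1-p)(a-a^2-b^2/q)=(1-p)b(a-b/q)$, which is positive because $a(q+1)>1$.

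\emph{Case 2.} We have $\rho(s)\geq b/q$, and $\eta(s)=pb/q+(1-p)2ab/q$. The gap is $(1-p)b(1-2a)/q$, which is positive because $a<1/2$.

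\emph{Case 3} is the main step. The natural test set is not $S$ alone, as in Theorem~\ref{thm:free}, but the closed unit ball $B_1=S\cup\{e\}$. The reason is that holding makes $e$ and $S$ exchange mass cheaply. Since $\tau\geq2$, the event $\{X_\tau\in B_1\}$ is contained in the event that the walk visits $B_1$ at some time $n\geq2$. I bound the probability of this event by conditioning on the first two steps.

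If $X_1=e$, which has probability $a$, the walk eventually leaves $e$ into $S$, so the visit occurs with probability one. If $X_1=s\in S$, which has probability $b$, then at time two the walk is in $B_1$ with probability $a+b/q$. Otherwise, with probability $b(q-1)/q$, it moves to distance two. From distance two, holding does not change the hitting probability. By the radial computation in the proof of Theorem~\ref{thm:free}, the walk then returns to distance one with probability $1/(q-1)$.

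This gives
\[
\rho(B_1)\leq a+b\Bigl(a+\frac{2b}{q}\Bigr).
\]
On the other side,
\[
\eta(B_1)=p+(1-p)\Bigl(1-\frac{b^2(q-1)}{q}\Bigr).
\]
Using $1-a-ab=b^2$, the difference simplifies to $b^2\bigl((q-1)p-1\bigr)/q$, which is positive because $p>1/(q-1)$. The only delicate points are the choice of $B_1$ rather than $S$ and the observation that laziness does not affect the hitting probabilities. After that, the three estimates combine to give \eqref{eq:lazy-gap}.
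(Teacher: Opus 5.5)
Your proof is correct and follows the paper's argument. You use the same three first-step cylinder cases, the same test set $B_1=\{e\}\cup S$ when $\tau\geq2$, and the same observation that holding leaves the hitting probability $1/(q-1)$ from distance two unchanged, and your bound $a+b(a+2b/q)$ is exactly the paper's $1-b^2(q-2)/q$, obtained by conditioning on $X_1$ rather than on $X_2$.
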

\begin{proof}
The same independent choice of times one and two realizes $\eta$. Direct convolution gives
\[
\mu_a^{*2}(e)=a^2+\frac{b^2}q,\qquad
\mu_a^{*2}(s)=\frac{2ab}q\quad(s\in S).
\]
Hence
\[
\mu_a(e)-\eta(e)=(1-p)b\left(a-\frac bq\right),\qquad
\mu_a(s)-\eta(s)=\frac{(1-p)b(1-2a)}q.
\]
If an ordinary rule stops on any first-step cylinder, its endpoint mass at that state is at least the entire cylinder mass. The displayed differences give the first two bounds in \eqref{eq:lazy-gap}.

Otherwise $\tau\geq2$. Let $B_1=\{e\}\cup S$. At time two the walk lies at distance two with probability $b^2(q-1)/q$, and otherwise lies in $B_1$. Deleting holding steps leaves simple random walk, so the probability of hitting $B_1$ from distance two remains $1/(q-1)$. Therefore
\[
\rho(B_1)\leq\Prob(\exists n\geq2:X_n\in B_1)
=1-\frac{b^2(q-2)}q.
\]
On the other hand,
\[
\eta(B_1)=1-\frac{(1-p)b^2(q-1)}q.
\]
Their difference is the third bound in \eqref{eq:lazy-gap}. The parameter assumptions make all three bounds positive.
\end{proof}

For $q=4$, $a=1/3$, and $p=1/2$, one has $\mu_a(e)=1/3$, $\mu_a(s)=1/6$, $\eta(e)=5/18$, and $\eta(s)=5/36$. The three lower bounds are $1/18$, $1/36$, and $1/18$, respectively. Thus $\TV(\rho,\eta)\geq1/36$ for every ordinary stopping law $\rho$.

The same first-step argument gives a general necessary condition. If a target satisfies $\eta(x)<\mu(x)$ for all $x\in\supp\mu$, an ordinary realization must have $\tau\geq2$. Consequently, for every $B\subseteq G$,
\begin{equation}\label{eq:first-capacity}
\eta(B)\leq\Prob_e(\exists n\geq2:X_n\in B)
=\sum_x\mu^{*2}(x)\Prob_x(T_B<\infty),
\end{equation}
where $T_B$ allows time zero. The estimates above combine the prefix constraint with this hitting-probability bound.

\section{Recurrence and ordinary realization}\label{sec:recurrence}
\subsection{Sampling a target and then hitting it}
\begin{theorem}\label{thm:recurrent}
Let $\mu$ be a non-Dirac probability measure whose random walk on the countable group $G$ is irreducible and recurrent. For every probability measure $\eta$ on $G$, there is an ordinary stopping time $1\leq\tau<\infty$ almost surely such that $\mu_\tau=\eta$.
\end{theorem}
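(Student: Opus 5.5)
The plan follows the section title: the walk first generates a random target $Y$ with law $\eta$ from its own increments, and then runs until it hits $Y$. Recurrence and irreducibility make the second phase almost surely finite. Everything must be measurable with respect to $(\F_n)$, with no auxiliary randomness. The only difficulty is to manufacture an exactly $\eta$-distributed random variable from an initial block of increments through a stopping rule.

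\textbf{Step 1: extract a uniform variable.} Since $\mu$ is not a point mass, choose a set $E\subseteq A=\supp\mu$ with $0<\mu(E)=\theta<1$. The indicators $\one_{\{g_j\in E\}}$ are i.i.d.\ Bernoulli$(\theta)$. Use a von Neumann pairing: look at consecutive pairs, and record a bit $0$ for the pattern $(1,0)$ and a bit $1$ for $(0,1)$, discarding equal pairs. This gives i.i.d.\ fair bits $\xi_1,\xi_2,\dots$. Each $\xi_k$ is revealed at an almost surely finite stopping time $\sigma_k$ of the natural filtration.

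\textbf{Step 2: sample the target.} Enumerate $G=\{x_1,x_2,\dots\}$ and partition $[0,1)$ into consecutive intervals $I_i$ of length $\eta(x_i)$. The binary expansion $V=\sum_k\xi_k2^{-k}$ is uniform on $[0,1)$. Let $\kappa$ be the first $k$ at which the dyadic interval determined by $\xi_1,\dots,\xi_k$ lies inside a single $I_i$, and set $Y=x_i$. Since $V$ almost surely lies in the interior of some $I_i$, we have $\kappa<\infty$ almost surely. Put $S=\sigma_\kappa$. Then $S\geq2$ is an ordinary stopping time, $Y$ is $\F_S$-measurable, and $\Prob(Y=x_i)=\Prob(V\in I_i)=\eta(x_i)$, possibly up to endpoints of measure zero.

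\textbf{Step 3: hit the target.} Define $\tau=\inf\{n\geq S: X_n=Y\}$. This is an ordinary stopping time with $\tau\geq S\geq1$, and $X_\tau=Y$ on $\{\tau<\infty\}$. By the strong Markov property at $S$, conditionally on $\F_S$ the process $X_S^{-1}X_{S+n}$ is a fresh $\mu$-walk. Reaching $Y$ means this fresh walk hits $X_S^{-1}Y$. For an irreducible recurrent walk on a group, every state is hit almost surely from every starting point, since recurrence is a class property and the group is a single communicating class. Hence $\tau<\infty$ almost surely and $\mu_\tau=\Law(Y)=\eta$.

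\textbf{Main obstacle.} The delicate point is Step 2: the target must have law exactly $\eta$, produced in finite time with no external coins. Von Neumann extraction together with the interval-refinement rule handles this, but I will check carefully that $\kappa$ is finite almost surely even when $\eta$ has infinite support. This holds because the endpoints of the intervals $I_i$ form a countable set, which has Lebesgue measure zero.
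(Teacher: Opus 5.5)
Your proposal is correct and follows the paper's proof essentially step for step. Both use von Neumann extraction of fair bits from $\one_{\{G_n\in E\}}$, then dyadic refinement against the intervals $I_i$ to declare an $\F_S$-measurable target $Y\sim\eta$, and then the first visit to $Y$ at or after $S$, which is finite by irreducibility, recurrence, and the strong Markov property.

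The one detail the paper makes explicit is the point-mass target, where it simply sets the sampling time to $2$. In your version this corresponds to letting $\kappa$ range over $k\geq1$, and you should say so: that convention is what guarantees $S\geq2$, and hence $\tau\geq1$, even when $\eta=\delta_e$.
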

\begin{proof}
Choose $A\subseteq G$ with $r=\mu(A)\in(0,1)$, and set $Z_n=\one_{\{G_n\in A\}}$. Read these indicators in successive pairs. Discard $00$ and $11$, and interpret $01$ as a zero bit and $10$ as a one bit. The retained bits are independent and fair. The number of pairs needed for each retained bit has a geometric distribution with success probability $2r(1-r)>0$, and is almost surely finite. This waiting time is independent of the value of the retained bit, because the two successful pairs have the same probability.

Enumerate the positive-mass states of $\eta$ as $y_1,y_2,\ldots$, and partition $[0,1)$ into consecutive half-open intervals $I_j$ of lengths $\eta(y_j)$. The infinite sequence of fair bits defines a uniform variable $V\in[0,1)$. After reading $k$ bits, its dyadic interval $J_k$ of length $2^{-k}$ is known. Stop reading as soon as $J_k\subseteq I_j$ for some $j$, and declare $Y=y_j$. Outside the countable set of interval endpoints, $V$ is in the interior of one $I_j$ and $|J_k|\to0$, so this declaration occurs after finitely many bits. The declaration time $\sigma$, measured in the original increment sequence, is therefore an ordinary almost surely finite stopping time. The variable $Y$ is $\F_\sigma$-measurable and has law $\eta$. For a point-mass target, take $\sigma=2$ and declare that target.

Define
\begin{equation}\label{eq:sample-hit}
\tau=\inf\{n\geq\sigma:X_n=Y\}.
\end{equation}
This is an ordinary stopping time, since
\[
\{\tau\leq n\}=
\bigcup_{k=1}^n\ \bigcup_{y\in G}
\{\sigma=k,\ Y=y,\ X_j=y\text{ for some }k\leq j\leq n\}
\in\F_n.
\]
Conditional on $\F_\sigma$, both $X_\sigma$ and $Y$ are fixed. Irreducibility and recurrence imply that the walk starting at $X_\sigma$ hits $Y$ almost surely. The strong Markov property therefore gives $\tau<\infty$ almost surely. By construction $X_\tau=Y$, and hence $\mu_\tau=\eta$.
\end{proof}

\begin{figure}[tb]
\centering
\begin{tikzpicture}[>=Stealth,every node/.style={font=\small}]
\draw[->] (0,0)--(11,0) node[right] {time};
\foreach \x/\lab in {0/0,4/\sigma,9/\tau}{\draw (\x,-.12)--(\x,.12) node[above=3mm] {$\lab$};}
\draw[decorate,decoration={brace,mirror,amplitude=4pt}] (0,-.35)--(4,-.35);
\draw[decorate,decoration={brace,mirror,amplitude=4pt}] (4,-.35)--(9,-.35);
\node[align=center] at (2,-1) {read increments\\declare $Y\sim\eta$};
\node[align=center] at (6.5,-1) {first visit to $Y$ at or after $\sigma$\\$X_\tau=Y$};
\node[align=center] at (4,1.1) {$Y$ and $X_\sigma$ are known};
\node[align=center] at (9,1.1) {recurrence ensures $\tau<\infty$};
\end{tikzpicture}
\caption{The ordinary realization in \eqref{eq:sample-hit}. The sampled target may depend on the position at time $\sigma$; the hitting argument conditions on both.}
\label{fig:recurrent}
\end{figure}

\begin{proposition}\label{prop:sampling-mean}
If $H(\eta)<\infty$, the sampling stage in Theorem~\ref{thm:recurrent} can be chosen with $\E\sigma<\infty$.
\end{proposition}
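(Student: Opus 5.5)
The plan is to keep the von Neumann extraction of fair bits from the proof of Theorem~\ref{thm:recurrent}, but to replace the arbitrary interval partition by one adapted to entropy. Then the expected number of bits read is at most $H(\eta)/\log 2$ plus a constant. Since each retained bit costs an independent geometric number of pairs, with mean $1/(2r(1-r))$ pairs and hence $1/(r(1-r))$ increments, Wald's identity will give
\[
\E\sigma\le \frac{\E N}{r(1-r)},
\]
where $N$ is the number of bits consumed. It therefore suffices to build a bit-reading rule that outputs $Y\sim\eta$ and has $\E N<\infty$.

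For the bit-reading rule, first reorder the atoms so that $\eta(y_1)\ge\eta(y_2)\ge\cdots$. The simplest choice is a Shannon--Fano--Elias type prefix code. For each $j$, put $\ell_j=\lceil-\log_2\eta(y_j)\rceil+1$. Kraft's inequality then leaves room for a prefix-free dyadic code with these lengths. However, reading a dyadic codeword of length $\ell_j$ produces $y_j$ with probability $2^{-\ell_j}$, not $\eta(y_j)$, so a pure code is not exact.

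The fix is the Knuth--Yao construction. Write each $\eta(y_j)$ in binary, $\eta(y_j)=\sum_k b_{jk}2^{-k}$. Assign to $y_j$ one dyadic interval of length $2^{-k}$ for every $k$ with $b_{jk}=1$, and arrange these intervals disjointly in $[0,1)$; this is possible because $\sum_{j,k}b_{jk}2^{-k}=1$. Reading bits until $V$ falls in one of the assigned dyadic intervals gives exactly law $\eta$ up to a null set. The stopping time of this bit process is $N=k$ on an interval of level $k$.

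The main step is the Knuth--Yao bound $\E N\le H_2(\eta)+2$, where $H_2=H/\log2$. I would verify it directly as follows. The contribution of $y_j$ is $\sum_k k\,b_{jk}2^{-k}$. If $\eta(y_j)\in[2^{-m},2^{-m+1})$, the digits satisfy $b_{jk}=0$ for $k<m$. Hence the contribution is at most
\[
\sum_{k\ge m}k2^{-k}\cdot(\text{used digits})\le \eta(y_j)\,m+2^{-m}\cdot 2\le \eta(y_j)\bigl(-\log_2\eta(y_j)+1\bigr)+2\eta(y_j).
\]
Summing over $j$ gives $\E N\le H_2(\eta)+3<\infty$. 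The fine constant is irrelevant here.

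Measurability and the stopping-time property are as in Theorem~\ref{thm:recurrent}: $\sigma$ is the time of completion of the $N$-th retained pair, and $Y$ is $\F_\sigma$-measurable. Wald's identity applies because $N$ is a stopping time for the i.i.d.\ sequence of retained-bit blocks, and each block's length is independent of its bit value, as noted in the proof of Theorem~\ref{thm:recurrent}. The point-mass case is trivial, with $\sigma=2$.

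The only genuine obstacle is the arithmetic in the Knuth--Yao bound. The crude estimate $\sum_{k\ge m}k2^{-k}=(m+1)2^{-m+1}$ is at most $2(m+1)\eta(y_j)$, which is linear in $-\log\eta(y_j)$ times $\eta(y_j)$. Even this cruder form gives $\E N\le 2H_2(\eta)+4$, which already suffices for finiteness. I would use this robust version rather than the sharp constant.
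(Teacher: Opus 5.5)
Your proof is correct, but it bounds the number of consumed bits with a different sampler from the paper's. The Wald step converting bits into increments, with mean $1/(r(1-r))$ increments per bit, is the same in both.

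\textbf{The paper's route.} It does not change the sampling stage. It keeps the consecutive-interval procedure of Theorem~\ref{thm:recurrent}, with an arbitrary enumeration. Conditional on $V\in I_j$, the variable $V$ is uniform on $I_j$. Failing to decide after $k$ bits forces $V$ to lie within $2^{-k}$ of an endpoint of $I_j$, so $\Prob(K>k\mid V\in I_j)\le\min\{1,2^{1-k}/\eta(y_j)\}$. Summing this tail gives $\E K\le H(\eta)/\log2+4$.

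\textbf{Your route.} You replace the interval partition by the Knuth--Yao dyadic decomposition. This gives the exact joint law $\Prob(N=k,\,Y=y_j)=b_{jk}2^{-k}$ and a direct computation of $\E N=\sum_{j,k}k\,b_{jk}2^{-k}$. Your digit estimate, giving at most $H(\eta)/\log2+3$, is right.

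\textbf{A step you should justify.} Total length one does not by itself give a disjoint placement of the dyadic intervals, because an interval of length $2^{-k}$ must start at a multiple of $2^{-k}$. The fix is the countable Kraft construction: place the intervals in order of nondecreasing level. This works because each level $k$ carries at most $2^k$ intervals, since $b_{jk}=1$ forces $\eta(y_j)\ge2^{-k}$.

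\textbf{Comparison.} The paper's route gives the slightly stronger statement that the sampler already used in Theorem~\ref{thm:recurrent} has finite mean, with no modification and for any enumeration. Your route gives a sharper constant and an explicit description of the bit-stopping law, at the cost of an extra construction. The preliminary reordering of atoms and the Shannon--Fano--Elias discussion are not needed.
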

\begin{proof}
Let $K$ be the number of fair bits used by the interval procedure. Conditional on $V\in I_j$, failure to decide after $k$ bits implies that $V$ is within $2^{-k}$ of an endpoint of $I_j$. Thus
\[
\Prob(K>k\mid V\in I_j)\leq
\min\left\{1,\frac{2^{1-k}}{\eta(y_j)}\right\}.
\]
Writing $L_j=\log_2(1/\eta(y_j))$ and summing the tail before and after $\lceil L_j\rceil+1$ gives
\[
\E(K\mid V\in I_j)\leq L_j+4,
\qquad \E K\leq\frac{H(\eta)}{\log2}+4.
\]
The number of original increments used per retained bit has mean $1/[r(1-r)]$ and is independent of the retained bits. Therefore, for a non-point-mass target,
\[
\E\sigma\leq\frac{H(\eta)/\log2+4}{r(1-r)}<\infty.
\]
The chosen sampling time for a point-mass target is two.
\end{proof}

\subsection{Finite-entropy recurrent measures on locally finite groups}
A group is \emph{locally finite} if each of its finitely generated subgroups is finite.

\begin{proposition}\label{prop:locally-finite}
Every countable infinite locally finite group $L$ admits a symmetric, fully supported recurrent probability measure $\rho$ with $H(\rho)<\infty$.
\end{proposition}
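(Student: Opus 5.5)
The plan is to build $\rho$ as a mixture of uniform measures on an exhausting chain of finite subgroups. Since $L$ is countable and locally finite, enumerate $L=\{x_1,x_2,\ldots\}$ and set $L_k=\langle x_1,\ldots,x_k\rangle$. This gives an increasing chain of finite subgroups with $\bigcup_kL_k=L$. Since $L$ is infinite, $|L_k|\to\infty$, so we may pass to a subsequence with $|L_k|$ strictly increasing. Let $u_k$ be the uniform probability measure on $L_k$. For weights $c_k>0$ with $\sum_kc_k=1$, put
\[
\rho=\sum_{k\geq1}c_ku_k .
\]
Each $u_k$ is symmetric, so $\rho$ is symmetric. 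Every $x\in L$ lies in some $L_k$ with $c_k>0$, so $\rho$ is fully supported. The work is to choose the weights so that both entropy finiteness and recurrence hold.

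\emph{Entropy.} Realize a $\rho$-step by first choosing an index $k$ with law $(c_k)$ and then a $u_k$-distributed element. Then
\[
H(\rho)\leq H\bigl((c_k)\bigr)+\sum_kc_k\log|L_k| .
\]
Both terms are finite as soon as $c_k\log|L_k|\leq 2^{-k}$ for large $k$. Here $H((c_k))$ is finite because $c_k\leq2^{-k}$ eventually, and $|L_k|\geq2$.

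\emph{Recurrence.} The key observation is an exact formula for the return probabilities. Condition on the indices $k_1,\ldots,k_n$ of the first $n$ steps, and let $K_n=\max_jk_j$. All increments then lie in $L_{K_n}$, and at least one of them is uniform on $L_{K_n}$ and independent of the others. Since left or right translation by a fixed element of $L_{K_n}$ preserves $u_{K_n}$, the product $X_n$ is uniform on $L_{K_n}$. Hence
\[
\rho^{*n}(e)=\E\bigl[|L_{K_n}|^{-1}\bigr]\geq |L_k|^{-1}\,\Prob(K_n\leq k)=|L_k|^{-1}(1-t_{k+1})^n,
\]
where $t_{k+1}=\sum_{j>k}c_j$. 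Summing over $n$ gives, for every $k$,
\[
\sum_{n\geq0}\rho^{*n}(e)\geq \frac{1}{|L_k|\,t_{k+1}} .
\]
So the walk is recurrent provided $|L_k|\,t_{k+1}\to0$ along some sequence of $k$.

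\emph{Choice of weights.} The main obstacle is to satisfy both requirements at once. Entropy needs $c_k\lesssim 2^{-k}/\log|L_k|$. Recurrence needs the tail beyond level $k$ to be much smaller than $1/|L_k|$, a size dictated by the previous level. Both are met by a recursive choice in which each weight is controlled by the size of the preceding subgroup and the logarithm of its own:
\[
c_{k+1}=\frac{\kappa\,4^{-k}}{|L_k|\,\log|L_{k+1}|}\qquad(k\geq1),
\]
with $c_1$ chosen so that the total mass is one. The constant $\kappa>0$ is small enough that $\sum_{k\geq1}c_{k+1}<1$, which forces $c_1>0$.

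With this choice, $c_{k+1}\log|L_{k+1}|\leq\kappa4^{-k}$, which gives finite entropy. Since $|L_j|$ and $\log|L_j|$ are nondecreasing and $\log|L_j|\geq\log2$,
\[
t_{k+1}=\sum_{j\geq k}c_{j+1}\leq\frac{\kappa}{|L_k|\log2}\sum_{j\geq k}4^{-j}=\frac{4\kappa\,4^{-k}}{3\log2\,|L_k|}.
\]
Thus $|L_k|\,t_{k+1}\leq C4^{-k}\to0$, and recurrence follows from the Green-sum bound above.
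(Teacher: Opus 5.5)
Your proof is correct and follows essentially the same route as the paper: a mixture of uniform measures on a strictly increasing exhaustion by finite subgroups, with weights chosen recursively in terms of $|L_k|$ and $\log|L_{k+1}|$, the chain-rule entropy bound, and the fact that, given the sampled indices, the product is uniform on the subgroup of maximal index. The only difference is minor. The paper evaluates the Green sum exactly as $\sum_n m_n^{-1}(r_n^{-1}-r_{n-1}^{-1})$ and arranges each term to be at least one. You instead bound the Green sum below by the single-level quantity $1/(|L_k|\,t_{k+1})$ and force $|L_k|\,t_{k+1}\to0$.
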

\begin{proof}
Choose a strictly increasing exhaustion by finite subgroups,
\[
K_1<K_2<\cdots,\qquad \bigcup_nK_n=L,\qquad m_n=|K_n|\geq2.
\]
Let $u_n$ be the uniform measure on $K_n$. Define $r_0=1$ and, recursively,
\[
r_n=\min\left\{\frac{r_{n-1}}2,\frac1{2m_n},
\frac{2^{-n}}{1+\log m_{n+1}}\right\},\qquad p_n=r_{n-1}-r_n.
\]
The exhaustion is fixed in advance, so these numbers are well defined. Each $p_n$ is positive, $r_n\leq2^{-n}$, and $\sum_np_n=1$. Set
\[
\rho=\sum_{n\geq1}p_nu_n.
\]
This measure is symmetric and fully supported.

To estimate its entropy, first sample an index $N$ with law $(p_n)$ and then a uniform element of $K_N$. The chain rule gives
\begin{equation}\label{eq:mixture-entropy}
H(\rho)\leq H((p_n))+\sum_np_n\log m_n.
\end{equation}
For $n\geq2$,
\[
p_n\log m_n\leq r_{n-1}\log m_n\leq2^{-(n-1)}.
\]
Also $p_n\leq2^{-(n-1)}$. Since $-x\log x$ is increasing on $[0,e^{-1}]$, for $n\geq3$,
\[
-p_n\log p_n\leq(n-1)(\log2)2^{-(n-1)}.
\]
Both sums in \eqref{eq:mixture-entropy} are finite.

If $i\leq j$, multiplication of uniform subgroup measures gives
\begin{equation}\label{eq:nested-uniform}
u_i*u_j=u_j*u_i=u_j.
\end{equation}
Each $x\in K_j$ has exactly $|K_i|$ representations $x=yz$ with $y\in K_i$ and $z\in K_j$. Thus, conditional on indices $N_1,\ldots,N_k$, the product of $k$ sampled increments is uniform on $K_{\max_iN_i}$. Put $q_n=\sum_{j\leq n}p_j=1-r_n$ and $q_0=0$. It follows that
\[
\rho^{*k}(e)=\sum_{n\geq1}\frac{q_n^k-q_{n-1}^k}{m_n}.
\]
All summands are nonnegative. Tonelli's theorem and the geometric-series formula now give
\begin{align*}
\sum_{k\geq1}\rho^{*k}(e)
&=\sum_{n\geq1}\frac1{m_n}
\left(\frac{q_n}{1-q_n}-\frac{q_{n-1}}{1-q_{n-1}}\right)\\
&=\sum_{n\geq1}\frac1{m_n}\left(\frac1{r_n}-\frac1{r_{n-1}}\right).
\end{align*}
Since $r_n\leq r_{n-1}/2$ and $r_n\leq1/(2m_n)$, each summand in the last series is at least one. The return Green sum diverges, proving recurrence of the irreducible walk.
\end{proof}

\begin{lemma}\label{lem:rec-liouville}
An irreducible recurrent random walk is Liouville.
\end{lemma}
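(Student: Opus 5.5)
Let $f\in\HH(G,\mu)$ be bounded and harmonic for the irreducible recurrent walk. The plan is to show that $f$ is constant by a coupling/martingale argument that uses recurrence to visit every state infinitely often. Fix a starting point $x$ and consider the right random walk $X_n=xg_1\cdots g_n$. Then $f(X_n)$ is a bounded martingale in the natural filtration, exactly as in the proof of Proposition~\ref{prop:factor}. By the martingale convergence theorem, $f(X_n)$ converges almost surely to a limit $Z$.

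The key step uses recurrence together with irreducibility. Since $\mu$ is generating, every state $y\in G$ is reachable from $x$. Recurrence of an irreducible chain means that the walk visits every state infinitely often almost surely. I will derive this from the standard fact that a recurrent irreducible chain returns to its starting point infinitely often. Combined with positive probability of reaching $y$ between successive returns, the strong Markov property and a Borel--Cantelli/geometric trials argument then give infinitely many visits to $y$. Since $G$ is countable, almost surely every state is visited infinitely often simultaneously. Along the visit times to $y$, the sequence $f(X_n)$ takes the value $f(y)$ infinitely often. Hence $Z=f(y)$ almost surely, for every $y$. Taking two states $y,y'$ gives $f(y)=f(y')$, so $f$ is constant.

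I expect the main obstacle to be the passage from ``recurrent'' in the paper's sense to ``every state visited infinitely often''. The paper uses divergence of the Green sum $\sum_k\mu^{*k}(e)$ in Proposition~\ref{prop:locally-finite}, so I will first note that divergence at $e$ is equivalent to $\Prob_e(T_e^+<\infty)=1$, via the first-visit decomposition $U(e)=1/(1-r_\mu)$ already recorded in Section~\ref{sec:potential}. Group invariance transfers this to every starting point. Then, for a target $y$, choose $n$ with $\mu^{*n}(x^{-1}y)>0$. Each return to $x$ starts an independent trial, by the strong Markov property, of reaching $y$ within $n$ further steps, and infinitely many such trials succeed almost surely. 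The remaining measure-theoretic details are routine.
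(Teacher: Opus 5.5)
Your argument is correct, but it takes a different route from the paper. The paper uses neither martingale convergence nor infinitely many visits. It fixes $x,y$ and takes $\Prob_x(T_y<\infty)=1$ as the standard consequence of irreducibility and recurrence. It then applies optional stopping to the bounded martingale $f(X_n)$ at $T_y\wedge n$, and bounded convergence gives $f(x)=\E_x f(X_{T_y})=f(y)$ directly. A single hit suffices there. Your argument instead identifies the almost sure limit $Z$ of $f(X_n)$, so it genuinely needs each state to be visited infinitely often, which is why you must supply the renewal/trials step. What your version buys is a pathwise statement: the martingale limit, which plays the role of $F(b_\mu)$ in \eqref{eq:mart-limit}, is almost surely a deterministic constant, so triviality of the boundary is visible along paths. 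The paper's version is shorter and uses a weaker input. One small point in your trials step: trials started at successive returns to $x$, each looking $n$ steps ahead, can overlap, so they are not literally independent. Either use returns spaced at least $n$ steps apart, or use the i.i.d.\ excursions from $x$, each of which visits $y$ with probability $\Prob_x(T_y<T_x^+)>0$ by irreducibility. With that adjustment your proof goes through.
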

\begin{proof}
For $x,y\in G$, recurrence gives $\Prob_x(T_y<\infty)=1$. If $f$ is bounded and harmonic, optional stopping at $T_y\wedge n$ and bounded convergence imply
\[
f(x)=\lim_n\E_x f(X_{T_y\wedge n})=f(y).
\]
\end{proof}

\begin{theorem}\label{thm:locally-finite-growth}
Let $L$ be a countable locally finite group which is not Choquet--Deny. There exist symmetric, fully supported finite-entropy measures $\mu,\eta$ on $L$ and an ordinary almost surely finite positive stopping time $\tau$ such that $\eta=\mu_\tau$, $\mu$ is recurrent and Liouville, and $\eta$ is non-Liouville.
\end{theorem}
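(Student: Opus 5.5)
The plan combines three ingredients proved earlier: the recurrent source of Proposition~\ref{prop:locally-finite}, the ordinary realization of Theorem~\ref{thm:recurrent}, and Lemma~\ref{lem:rec-liouville}. The non-Liouville target will come from the theorem of Frisch, Hartman, Tamuz, and Vahidi Ferdowsi \cite{FrischHartmanTamuzVahidiFerdowsi2019}.

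First, a locally finite group that is not Choquet--Deny is infinite, since every finite group is Choquet--Deny. So Proposition~\ref{prop:locally-finite} gives a symmetric, fully supported recurrent measure $\mu=\rho$ with $H(\mu)<\infty$. Full support makes the walk irreducible. The measure $\mu$ is not a point mass, because $L$ is infinite and $\mu$ charges every element. Lemma~\ref{lem:rec-liouville} then shows that $\mu$ is Liouville.

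Second, I need the target. By \cite{FrischHartmanTamuzVahidiFerdowsi2019}, a countable group that is not Choquet--Deny has an ICC quotient, and it carries a symmetric, fully supported, non-Liouville measure $\eta_0$ of finite entropy. I will cite the finite-entropy, symmetric, full-support form of their construction. If only a generating non-Liouville measure is available, I would repair it as follows.
\begin{itemize}
\item \emph{Symmetry.} Symmetrize by passing to $\check\eta_0$ or $\tfrac12(\eta_0+\check\eta_0)$. This is the delicate point, since mixtures can change the boundary, so I would rely on their theorem providing symmetric examples directly.
\item \emph{Full support.} Mix $\eta_0$ with a small multiple of the recurrent measure $\rho$ itself. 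The risk is that convex combinations can change Liouvilleness. I would therefore prefer to quote their construction, which already produces fully supported symmetric finite-entropy non-Liouville measures.
\end{itemize}
Set $\eta=\eta_0$.

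Third, apply Theorem~\ref{thm:recurrent} to the irreducible recurrent non-Dirac source $\mu$ and the target $\eta$. This gives an ordinary stopping time $1\le\tau<\infty$ almost surely with $\mu_\tau=\eta$. The conclusion $B_\mu\prec B_\eta$ is then automatic: $B_\mu$ is trivial while $B_\eta$ is not, and the canonical factor of Proposition~\ref{prop:factor} is proper. Finite entropy of both measures holds by construction.

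The main obstacle is the second step, namely securing a non-Liouville measure that is simultaneously symmetric, fully supported, and of finite entropy. I expect this to be a direct citation of \cite{FrischHartmanTamuzVahidiFerdowsi2019}, whose construction on groups with an ICC quotient yields symmetric finite-entropy non-Liouville measures. Adjusting the support is harmless if done inside their construction, for instance by adding a small amount of a fully supported finite-entropy symmetric measure before verifying non-Liouvilleness. The remaining steps are immediate from the earlier results.
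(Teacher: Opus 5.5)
Your proposal is correct and follows the paper's proof: infinitude of $L$, the recurrent source from Proposition~\ref{prop:locally-finite}, Liouville via Lemma~\ref{lem:rec-liouville}, the symmetric fully supported finite-entropy non-Liouville target cited from \cite{FrischHartmanTamuzVahidiFerdowsi2019}, and ordinary realization by Theorem~\ref{thm:recurrent}. The hedged repairs in your second step are unnecessary once you cite that theorem directly, as the paper does. Your explicit check that full support gives irreducibility and a non-Dirac source is a small step the paper leaves implicit.
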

\begin{proof}
A finite group is Choquet--Deny, so $L$ is infinite. Proposition~\ref{prop:locally-finite} supplies $\mu$, and Lemma~\ref{lem:rec-liouville} gives its Liouville property. The classification theorem of Frisch, Hartman, Tamuz, and Vahidi Ferdowsi \cite[Theorem 1]{FrischHartmanTamuzVahidiFerdowsi2019} supplies a symmetric, fully supported finite-entropy non-Liouville measure $\eta$. Apply Theorem~\ref{thm:recurrent} to realize this target by an ordinary stopping time. Proposition~\ref{prop:factor} then gives a proper canonical boundary factor.
\end{proof}

\subsection{An explicit source on the finite-permutation group}
Let $L=\FSym(\Z)$, the group of permutations of $\Z$ with finite support. Enumerate $\Z=\{z_1,z_2,\ldots\}$ and put
\[
K_n=\Sym\{z_1,\ldots,z_n\},\qquad
p_n=\frac1{(n-1)!}-\frac1{n!}=\frac{n-1}{n!}\quad(n\geq2).
\]
The measure
\begin{equation}\label{eq:explicit-source}
\rho=\sum_{n\geq2}p_nu_{K_n}
\end{equation}
is symmetric and fully supported. It has finite entropy because
\[
H(\rho)\leq H((p_n))+\sum_{n\geq2}p_n\log(n!)
\leq2\sum_{n\geq2}\frac{n\log n}{(n-1)!}<\infty.
\]
Here $-\log p_n\leq\log(n!)$ was used for the index entropy.

For $q_m=\Prob(N\leq m)=1-1/m!$ and $q_1=0$, identity \eqref{eq:nested-uniform} gives
\[
\rho^{*k}(e)=\sum_{m\geq2}\frac{q_m^k-q_{m-1}^k}{m!}.
\]
Consequently,
\begin{equation}\label{eq:factorial-return}
\sum_{k\geq1}\rho^{*k}(e)
=\sum_{m\geq2}\frac{m!-(m-1)!}{m!}
=\sum_{m\geq2}\left(1-\frac1m\right)=\infty.
\end{equation}
This proves recurrence directly.

Every nonidentity element of $L$ has infinitely many conjugates: move its nonempty finite support to pairwise disjoint finite subsets of $\Z$. Thus $L$ has the infinite conjugacy class property and is not Choquet--Deny by \cite{FrischHartmanTamuzVahidiFerdowsi2019}. Theorem~\ref{thm:locally-finite-growth} applies to the explicit source \eqref{eq:explicit-source}.

One can make the sampling bits visible in a separate coordinate. Let $u$ be the uniform measure on $C_2$, let $H=L\times C_2$, and set $\mu=\rho\times u$. Then
\[
H(\mu)=H(\rho)+\log2,\qquad
\mu^{*k}(e,e)=\frac12\rho^{*k}(e)\quad(k\geq1).
\]
Hence $\mu$ is recurrent and Liouville. Choose a symmetric, fully supported finite-entropy non-Liouville measure $\theta_0$ on $L$ and put $\eta=\theta_0\times u$. A nonconstant bounded $\theta_0$-harmonic function lifts to an $\eta$-harmonic function, so $\eta$ is non-Liouville. The $C_2$ increments are fair bits for the sampling stage of Theorem~\ref{thm:recurrent}, which realizes $\eta$ by ordinary stopping.

For comparison with finitely generated groups, let $s$ act on $\Z$ by translation and form
\[
\Gamma=(L\rtimes\langle s\rangle)\times C_2.
\]
Translation, the transposition $(0\ 1)$, and the nonidentity element of $C_2$ generate $\Gamma$: conjugating $(0\ 1)$ by powers of $s$ gives all adjacent transpositions. The subgroup $H$ is normal of infinite index, with $\Gamma/H\cong\Z$. Both measures constructed above have support generating $H$. Viewed as walks on $\Gamma$, they stay in the left cosets $gH$; their source is Liouville on each such coset, while their target has nonconstant bounded harmonic functions there. The construction thus concerns an infinitely generated support group inside a three-generated ambient group.

\section{Entropy and rigidity of stopping transformations}\label{sec:entropy}
\subsection{Boundary entropy and its pathwise limit}
Let $\alpha$ be generating, and let $(C,\lambda)$ be an $\alpha$-stationary probability $G$-space. Write
\[
K_g(b)=\frac{d(g\lambda)}{d\lambda}(b),\qquad c(g,b)=\log K_g(b).
\]
Stationarity and generation imply quasi-invariance, as in Section~\ref{sec:setup}. Since $G$ is countable, the kernels can be chosen on a common conull set so that the cocycle identity holds:
\begin{equation}\label{eq:cocycle}
c(gh,b)=c(g,b)+c(h,g^{-1}b).
\end{equation}
The Furstenberg entropy is
\begin{equation}\label{eq:furstenberg}
h_\alpha(C,\lambda)=\sum_g\alpha(g)\int K_g\log K_g\,d\lambda.
\end{equation}
For $H(\alpha)<\infty$, the classical entropy theorem states that
\begin{equation}\label{eq:classical-entropy}
h_\alpha(B_\alpha,\nu_\alpha)=h(\alpha),\qquad
h(\alpha)=0\ \Longleftrightarrow\ \alpha\text{ is Liouville};
\end{equation}
see \cite{KaimanovichVershik1983,Derriennic1980,Kaimanovich2000}.

\begin{lemma}\label{lem:cocycle-entropy}
Suppose $H(\alpha)<\infty$ and $(C,\lambda)$ is $\alpha$-stationary. For $n\geq1$, put
\[
J_n=\sum_g\alpha^{*n}(g)\int K_g\log K_g\,d\lambda.
\]
Then $0\leq J_n\leq H(\alpha^{*n})$, $J_{n+m}=J_n+J_m$, and
\begin{equation}\label{eq:Jn}
J_n=n h_\alpha(C,\lambda),\qquad h_\alpha(C,\lambda)\leq h(\alpha).
\end{equation}
If $(Z_n,b)$ has conditional law $\Law(b\mid Z_n=g)=g\lambda$ and $Z_n\sim\alpha^{*n}$, then
\begin{equation}\label{eq:negative-part}
\E c(Z_n,b)=J_n,\qquad \E[c(Z_n,b)^-]\leq\frac1e.
\end{equation}
\end{lemma}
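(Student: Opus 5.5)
The plan is to prove the cocycle statements first and derive the entropy bounds from them. Every claim is about the integrals
\[
I(g)=\int K_g\log K_g\,d\lambda=\int c(g,b)\,d(g\lambda)(b),
\]
which are the relative entropies $D(g\lambda\,\|\,\lambda)\geq0$. Nonnegativity of $J_n$ follows at once. The identity $\E c(Z_n,b)=J_n$ holds by definition, since conditional on $Z_n=g$ the variable $b$ has law $g\lambda$.

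For the negative-part bound I will use $x\log x\geq-1/e$ on $[0,\infty)$:
\[
\E[c(Z_n,b)^-]=\sum_g\alpha^{*n}(g)\int K_g(\log K_g)^-\,d\lambda\leq\frac1e.
\]
This calculation also shows that each $I(g)$ is a well-defined extended value.

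\textbf{Additivity.} This is the main step. Realize $Z_{n+m}=Z_nZ'_m$ with $Z'_m\sim\alpha^{*m}$ independent of $Z_n$. Draw $b$ with $\Law(b\mid Z_n,Z'_m)=Z_nZ'_m\lambda$. Put $b'=Z_n^{-1}b$; conditionally $b'\sim Z'_m\lambda$. By \eqref{eq:cocycle},
\[
c(Z_nZ'_m,b)=c(Z_n,b)+c(Z'_m,b').
\]
Take expectations. The second term averages to $J_m$. For the first term, integrate out $Z'_m$ conditional on $Z_n=g$: the law of $b$ is $g\,\alpha^{*m}*\lambda=g\lambda$ by stationarity, so the term averages to $J_n$.

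The obstacle is justifying linearity of expectation when the integrals may be infinite. The negative parts are uniformly bounded by $1/e$, so each expectation is a well-defined element of $(-\infty,\infty]$. Adding such extended quantities is legitimate. This gives $J_{n+m}=J_n+J_m$, and $J_1=h_\alpha(C,\lambda)$ by \eqref{eq:furstenberg}, so $J_n=nJ_1$.

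\textbf{Upper bound.} Next I show $J_n\leq H(\alpha^{*n})$. Write $\alpha^{*n}*\lambda=\lambda$, i.e. $\lambda=\sum_g\alpha^{*n}(g)\,g\lambda$. Then $g\lambda\leq\alpha^{*n}(g)^{-1}\lambda$, hence
\[
K_g\leq\frac1{\alpha^{*n}(g)}\quad\text{on }\supp\alpha^{*n}.
\]
This gives $I(g)=\int\log K_g\,d(g\lambda)\leq-\log\alpha^{*n}(g)$. Summing against $\alpha^{*n}(g)$ gives $J_n\leq H(\alpha^{*n})\leq nH(\alpha)<\infty$, so all the quantities above are finite.

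Finally, dividing by $n$ gives $h_\alpha(C,\lambda)=J_n/n\leq H(\alpha^{*n})/n\to h(\alpha)$, which completes the proof.
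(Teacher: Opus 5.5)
Your proposal is correct and follows essentially the same argument as the paper: the bound $K_g\le 1/\alpha^{*n}(g)$ from $\alpha^{*n}*\lambda=\lambda$, the pointwise bound $x\log x\geq -1/e$ for the negative parts, and additivity by integrating the cocycle identity and using stationarity to collapse $gh\lambda$ to $g\lambda$. Your coupling $Z_{n+m}=Z_nZ'_m$ is a probabilistic phrasing of the paper's double sum, and your remark that expectations with integrable negative parts can be added in $(-\infty,\infty]$ makes explicit a point the paper leaves implicit.
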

\begin{proof}
Stationarity gives
\[
\sum_g\alpha^{*n}(g)K_g(b)=1
\]
almost everywhere. Thus $K_g\leq1/\alpha^{*n}(g)$ whenever $\alpha^{*n}(g)>0$. Since $\int K_g\,d\lambda=1$, convexity of $x\log x$ gives
\[
0\leq\int K_g\log K_g\,d\lambda\leq-\log\alpha^{*n}(g).
\]
This proves the bounds on $J_n$. Also,
\[
\sum_g\alpha^{*n}(g)\int_{\{K_g<1\}}-K_g\log K_g\,d\lambda
\leq\frac1e,
\]
which gives \eqref{eq:negative-part} and ensures integrability of the cocycles in the following calculation.

Integrate \eqref{eq:cocycle} against $gh\lambda$ and average over $g$ and $h$ with distributions $\alpha^{*n}$ and $\alpha^{*m}$. The term $c(h,g^{-1}b)$ integrates to $\int c(h,b)\,d(h\lambda)(b)$. For the term $c(g,b)$, summing over $h$ replaces $gh\lambda$ by $g\lambda$, since $\alpha^{*m}*\lambda=\lambda$. Summing over $g$ therefore gives $J_{n+m}=J_n+J_m$. It follows that $J_n=nJ_1$. Divide $J_n\leq H(\alpha^{*n})$ by $n$ and take the limit to obtain \eqref{eq:Jn}.
\end{proof}

\begin{lemma}\label{lem:boundary-shannon}
Let $\mu$ be generating with $H(\mu)<\infty$, and let $b_\mu$ be the boundary variable of the $\mu$-walk. For the kernels of $\Bmu$,
\begin{equation}\label{eq:boundary-shannon}
\frac1n c(X_n,b_\mu)\longrightarrow h(\mu)
\qquad\text{almost surely}.
\end{equation}
\end{lemma}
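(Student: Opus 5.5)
The proof rests on the cocycle identity \eqref{eq:cocycle} along the sample path, combined with the pointwise ergodic theorem for the increment shift. Write $X_n=g_1\cdots g_n$ and let $\theta$ denote the shift of increments. The boundary variable is shift-equivariant: $b_\mu(\omega)=g_1\,b_\mu(\theta\omega)$ almost surely, since the boundary is the quotient by tail-invariant events and the map from paths to boundary points intertwines left multiplication. Iterating \eqref{eq:cocycle} with $g=g_1$, $h=g_2\cdots g_n$ gives the telescoping decomposition
\[
c(X_n,b_\mu)=\sum_{j=0}^{n-1}c\bigl(g_{j+1},\,b_\mu(\theta^{j+1}\omega)\bigr)
=\sum_{j=0}^{n-1}\phi(\theta^j\omega),\qquad
\phi(\omega)=c\bigl(g_1,b_\mu(\theta\omega)\bigr).
\]
Here I use $X_j^{-1}b_\mu(\omega)=b_\mu(\theta^j\omega)$ at each stage.

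Next I check that $\phi$ is integrable and identify its mean. The first increment $g_1$ is independent of $\theta\omega$, and $b_\mu(\theta\omega)$ has law $\nu_\mu$. Hence
\[
\E\phi=\sum_g\mu(g)\int c(g,b)\,d\nu_\mu(b).
\]
This is not yet of the form $\int K_g\log K_g\,d\nu_\mu=\int c(g,b)\,d(g\nu_\mu)$, so I need to rewrite it. The integral against $g\nu_\mu$ is obtained by a change of variables: the cocycle identity with $h=g^{-1}$ gives $c(g,gb)=-c(g^{-1},b)$. Taking the alternative decomposition in the other order, $X_n=X_{n-1}g_n$, avoids this. Instead, I use the conditional law statement in Lemma~\ref{lem:cocycle-entropy}: conditional on $X_1=g$, the variable $b_\mu$ has law $g\nu_\mu$. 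Therefore $\phi$ should be reindexed as $c(g_1,b_\mu(\omega))$ evaluated with $X_1^{-1}b_\mu=b_\mu\circ\theta$. Concretely, each summand equals $\psi(\theta^j\omega)$ with $\psi(\omega)=c(g_1,b_\mu(\omega))$, where $\psi$ is a function of $(g_1,b_\mu)$, and $(X_1,b_\mu)$ has exactly the joint law of \eqref{eq:negative-part} with $n=1$. Lemma~\ref{lem:cocycle-entropy} then gives $\E\psi=J_1=h_\mu(B_\mu,\nu_\mu)$ and $\E\psi^-\le 1/e$. Moreover $J_1\le H(\mu)<\infty$, so $\psi\in L^1$. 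I will recheck the bookkeeping of which argument of $c$ appears after each shift; the cleanest form is $c(X_n,b)=c(X_{n-1},b)+c(g_n,X_{n-1}^{-1}b)$, and $X_{n-1}^{-1}b_\mu=b_\mu\circ\theta^{n-1}$.

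Finally, the Bernoulli shift on increments is ergodic and measure-preserving. Birkhoff's theorem therefore gives $\frac1n c(X_n,b_\mu)\to\E\psi=h_\mu(B_\mu,\nu_\mu)$ almost surely. The classical entropy theorem \eqref{eq:classical-entropy} identifies this limit with $h(\mu)$.

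The main obstacle is rigor in the pathwise equivariance $X_j^{-1}b_\mu=b_\mu\circ\theta^j$ on a common conull set. A second issue is choosing versions of the kernels $K_g$ so that the cocycle identity holds at the random points $b_\mu\circ\theta^j$. The latter requires the identity to hold off a $G$-invariant null set, which is possible because $G$ is countable and $\nu_\mu$ is quasi-invariant. For the equivariance, I would use the fact that $b_\mu$ is defined through the limits \eqref{eq:mart-limit}, together with $\Poi_\mu F(g_1Y)=\Poi_\mu(F\circ g_1)(Y)$, applied over a countable generating family of functions $F$.
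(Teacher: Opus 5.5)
Your final argument is correct and is exactly the paper's proof: the telescoping $c(X_n,b_\mu)=\sum_{j<n}\psi\circ\theta^j$ with $\psi=c(g_1,b_\mu)$ (the paper's $F$), integrability and $\E\psi=h_\mu(B_\mu,\nu_\mu)$ from Lemma~\ref{lem:cocycle-entropy} with $n=1$, then Birkhoff's theorem and the classical entropy theorem. Delete the first display with $\phi=c(g_1,b_\mu\circ\theta)$, though: it is off by one shift relative to the identity $X_j^{-1}b_\mu=b_\mu\circ\theta^j$ you state right after it, and its mean $\sum_g\mu(g)\int c(g,b)\,d\nu_\mu(b)\le0$ would in general give the wrong, nonpositive, limit.
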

\begin{proof}
On the increment space $G^{\N}$, let $\vartheta$ be the left shift. Equivariance of the boundary variable gives
\[
b_\mu(\omega)=G_1(\omega)b_\mu(\vartheta\omega).
\]
Put $F(\omega)=c(G_1(\omega),b_\mu(\omega))$. Conditional on $G_1=g$, the boundary variable has law $g\nu_\mu$. Lemma~\ref{lem:cocycle-entropy} gives
\[
\E F^-\leq1/e,\qquad \E F^+\leq H(\mu),\qquad
\E F=h_\mu(B_\mu,\nu_\mu)=h(\mu).
\]
In particular, $F$ is integrable. Iterating \eqref{eq:cocycle} and the boundary identity gives
\[
c(X_n,b_\mu)=\sum_{j=0}^{n-1}F\circ\vartheta^j.
\]
The Bernoulli shift is ergodic, so Birkhoff's theorem proves \eqref{eq:boundary-shannon}.
\end{proof}

\subsection{Positive asymptotic entropy forces finite mean}
\begin{theorem}\label{thm:positive-mean}
Let $\mu$ and $\eta=\mu_\tau$ be generating measures of finite entropy, where $\tau$ is a positive almost surely finite ordinary or randomized stopping time. If $h(\mu)>0$, then $\E\tau<\infty$.
\end{theorem}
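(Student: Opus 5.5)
The plan is to compare two computations of the same boundary cocycle: the $\mu$-walk's Birkhoff sum along the path, and the Furstenberg entropy of $\Bmu$ viewed as an $\eta$-space. Concretely, I will show that
\[
h(\mu)\,\E(\tau\wedge n)\leq h_\eta(B_\mu,\nu_\mu)\leq H(\eta)
\]
for every $n$. Letting $n\to\infty$ then gives $\E\tau\leq H(\eta)/h(\mu)<\infty$.

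\emph{Step 1: $\Bmu$ is an $\eta$-space with finite entropy.} Optional stopping in Proposition~\ref{prop:factor} shows that $\eta*\nu_\mu=\nu_\mu$, since Poisson transforms of bounded functions are $\eta$-harmonic. So $(B_\mu,\nu_\mu)$ is $\eta$-stationary, and the kernels $K_g$ and the cocycle $c$ are the same objects as for $\mu$. Lemma~\ref{lem:cocycle-entropy}, applied with $\alpha=\eta$ and $n=1$, then gives
\[
0\leq h_\eta(B_\mu,\nu_\mu)=\sum_g\eta(g)\int K_g\log K_g\,d\nu_\mu\leq H(\eta)<\infty .
\]
Now work on the path space, with auxiliary variables adjoined and $\mathcal G_n$ the enlarged filtration. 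Conditional on $\mathcal G_\sigma$ for a stopping time $\sigma$, the future increments are fresh, so the strong Markov property gives $\Law(b_\mu\mid\mathcal G_\sigma)=X_\sigma\nu_\mu$. Taking $\sigma=\tau$ yields
\[
h_\eta(B_\mu,\nu_\mu)=\E\, c(X_\tau,b_\mu).
\]

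\emph{Step 2: a Wald identity for the truncated time.} With $F$ as in the proof of Lemma~\ref{lem:boundary-shannon}, the cocycle identity gives
\[
c(X_{\tau\wedge n},b_\mu)=\sum_{j<\tau\wedge n}F\circ\vartheta^j .
\]
The event $\{j<\tau\}$ lies in $\mathcal G_j$. The variable $F\circ\vartheta^j$ depends only on increments after time $j$, so it is independent of $\mathcal G_j$ and has mean $h(\mu)$. Because the sum has at most $n$ terms and $F$ is integrable, we get
\[
\E\, c(X_{\tau\wedge n},b_\mu)=h(\mu)\,\E(\tau\wedge n).
\]

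\emph{Step 3: monotonicity from $\tau\wedge n$ to $\tau$ (the main obstacle).} Write $\sigma=\tau\wedge n$, $W=X_\sigma^{-1}X_\tau$ and $b'=X_\sigma^{-1}b_\mu$. The cocycle identity \eqref{eq:cocycle} gives
\[
c(X_\tau,b_\mu)=c(X_\sigma,b_\mu)+c(W,b').
\]
Conditional on $\mathcal G_\sigma$, the pair $(W,b')$ is a stopped increment together with the boundary point of the restarted walk. It therefore satisfies $\Law(b'\mid W=w,\mathcal G_\sigma)=w\nu_\mu$, and its $b'$-marginal is $\nu_\mu$. As in the first part of Lemma~\ref{lem:cocycle-entropy}, the conditional expectation of $c(W,b')$ is then
\[
\sum_w\Prob(W=w\mid\mathcal G_\sigma)\int K_w\log K_w\,d\nu_\mu\geq0,
\]
and its negative part is bounded by $1/e$. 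The delicate point is integrability. The negative parts of both terms are bounded by $1/e$; this follows from \eqref{eq:negative-part}, whose proof uses only stationarity of the marginal. Also $\E\, c(X_\tau,b_\mu)<\infty$ by Step 1. Hence all expectations are well defined and
\[
h(\mu)\,\E(\tau\wedge n)=\E\, c(X_\sigma,b_\mu)\leq\E\, c(X_\tau,b_\mu)=h_\eta(B_\mu,\nu_\mu)\leq H(\eta).
\]
Monotone convergence in $n$ then finishes the proof. I expect the most care to be needed in justifying the conditional-law claims under randomization. Auxiliary randomness revealed at time zero is independent of the increments, so the boundary variable of the restarted walk is unaffected by it.
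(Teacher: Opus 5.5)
Your argument is correct, and it takes a genuinely different route from the paper's.

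The paper argues by contradiction through the iterated rule:
\begin{itemize}
\item if $\E\tau=\infty$, the strong law gives $T_k/k\to\infty$;
\item the Birkhoff-type Lemma~\ref{lem:boundary-shannon} then forces $c(Y_k,b_\mu)/k\to\infty$ almost surely;
\item Fatou's lemma makes this incompatible with $\E c(Y_k,b_\mu)=k\,h_\eta(B_\mu,\nu_\mu)$, which comes from Lemma~\ref{lem:cocycle-entropy} applied to $\eta^{*k}$.
\end{itemize}

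You work with a single block and only with expectations. Your Wald identity $\E c(X_{\tau\wedge n},b_\mu)=h(\mu)\E(\tau\wedge n)$ needs only integrability of $F$ and independence of $F\circ\vartheta^j$ from $\mathcal G_j$, not the ergodic theorem. Your monotonicity step is also sound. Since $X_\sigma$ is $\mathcal G_\tau$-measurable, $\Law(b'\mid\mathcal G_\tau)=W\nu_\mu$. Hence the remainder $c(W,b')$ has conditional mean $\int K_W\log K_W\,d\nu_\mu\geq0$. Your bookkeeping for extended-valued expectations is also correct: both negative parts have mean at most $1/e$, and $\E c(X_\tau,b_\mu)<\infty$.

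What your route buys is a direct, explicit bound,
\[
h(\mu)\E\tau\leq h_\eta(B_\mu,\nu_\mu)\leq \min\{H(\eta),h(\eta)\},
\]
where the $h(\eta)$ bound comes from \eqref{eq:Jn}. It needs no iteration, no strong law and no almost-sure limit. The paper's route, as written, gives only finiteness. Its advantage is economy within the paper: the same coupling and Fatou argument are reused in Theorem~\ref{thm:finite-mean} to prove $m\,h(\mu)\leq h_\eta(B_\mu,\nu_\mu)$.
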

\begin{proof}
Iterate the rule as in Proposition~\ref{prop:factor}, obtaining $T_k$ and $Y_k=X_{T_k}$. The block lengths $T_j-T_{j-1}$ are independent and each has the same distribution as $\tau$. If $\E\tau=\infty$, then, for every positive integer $R$, the strong law applied to truncated block lengths gives
\[
\liminf_{k\to\infty}\frac{T_k}{k}\geq\E(\tau\wedge R).
\]
Letting $R\to\infty$ yields $T_k/k\to\infty$ almost surely. Lemma~\ref{lem:boundary-shannon}, applied along this subsequence, then gives
\begin{equation}\label{eq:divergent-cocycle}
\frac{c(Y_k,b_\mu)}k
=\frac{c(X_{T_k},b_\mu)}{T_k}\frac{T_k}{k}
\longrightarrow\infty.
\end{equation}

The original boundary measure $\nu_\mu$ is $\eta$-stationary. For a bounded measurable $F$, optional stopping of its Poisson transform gives
\[
\int F\,d\nu_\mu=\sum_g\eta(g)\int F\,d(g\nu_\mu).
\]
Moreover, conditional on the stopped history up to $T_k$, the remaining original increments have law $\mu^{\otimes\N}$. Hence the conditional law of $b_\mu$ given $Y_k=g$ is $g\nu_\mu$. Applying Lemma~\ref{lem:cocycle-entropy} to $\alpha=\eta$ on this boundary yields
\[
\E c(Y_k,b_\mu)=k h_\eta(B_\mu,\nu_\mu),\qquad
\E[c(Y_k,b_\mu)^-]\leq1/e,
\]
where $h_\eta(B_\mu,\nu_\mu)\leq H(\eta)<\infty$. Fatou's lemma applied to the positive parts in \eqref{eq:divergent-cocycle} gives
\[
\infty\leq\liminf_k\frac{\E[c(Y_k,b_\mu)^+]}k
=h_\eta(B_\mu,\nu_\mu)<\infty.
\]
This contradiction proves the assertion.
\end{proof}

\subsection{Finite-mean entropy scaling and boundary preservation}
We first record an entropy estimate for a segment between stopping times.

\begin{lemma}\label{lem:word-entropy-bound}
Let $H(\mu)<\infty$, let $S\leq R$ be finite stopping times in a filtration with independent auxiliary randomness, and assume $\ell=\E(R-S)<\infty$. For
\[
L=R-S,\qquad V=(G_{S+1},\ldots,G_R),
\]
including the empty word when $L=0$, one has
\begin{equation}\label{eq:word-bound}
H(V\mid L)\leq H(\mu)\ell,\qquad
H(L)\leq(\ell+1)\log(\ell+1)-\ell\log\ell
\leq\log(\ell+1)+1.
\end{equation}
\end{lemma}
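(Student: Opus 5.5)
The plan is to prove the two bounds in \eqref{eq:word-bound} separately: the bound on $H(V\mid L)$ by a sequential chain-rule argument over the letters of $V$, and the bound on $H(L)$ by comparison with a geometric law. Throughout, $(\mathcal G_n)$ denotes the enlarged filtration generated by the increments and the auxiliary randomness. The property used is the one recorded in Section~\ref{sec:setup}: future increments keep their law in this filtration. Concretely, for every finite stopping time $T$ the increment $G_{T+1}$ is independent of $\mathcal G_T$ and has law $\mu$.

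\emph{Bound on $H(V\mid L)$.} For $j\geq1$, let $V_j=G_{S+j}$ on $\{L\geq j\}$ and $V_j=\ast$ (a blank symbol) on $\{L<j\}$, and let $V^{(n)}=(V_1,\ldots,V_n)$.
\begin{enumerate}
\item For each fixed $n$, the chain rule, followed by dropping conditioning down to the indicator $\one_{\{L\geq j\}}$, gives
\[
H(V^{(n)}\mid L)\leq\sum_{j=1}^nH(V_j\mid L,V_1,\ldots,V_{j-1})\leq\sum_{j=1}^nH\bigl(V_j\mid\one_{\{L\geq j\}}\bigr).
\]
This second step is legitimate because $\one_{\{L\geq j\}}$ is a function of $L$.
\item On $\{L<j\}$ the variable $V_j$ is constant. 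Hence the $j$-th term equals $\Prob(L\geq j)\,H\bigl(\Law(G_{S+j}\mid L\geq j)\bigr)$.
\item The event $\{L\geq j\}=\{R>S+j-1\}$ belongs to $\mathcal G_{S+j-1}$, and $S+j-1$ is a finite stopping time. Applying the independence property above to $T=S+j-1$, the conditional law of $G_{S+j}$ given this event is exactly $\mu$.
\item Summing, $H(V^{(n)}\mid L)\leq H(\mu)\sum_{j\geq1}\Prob(L\geq j)=H(\mu)\ell$ for every $n$.
\end{enumerate}

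To pass to $V$ itself, note that $L<\infty$ almost surely, so $(V^{(n)},L)$ determines $V$ as soon as $n\geq L$. Thus $V$ is measurable with respect to $\sigma(L)\vee\bigvee_n\sigma(V^{(n)})$, an increasing union. Monotone convergence of conditional entropy along increasing $\sigma$-algebras then gives $H(V\mid L)=\sup_nH(V^{(n)}\mid L)\leq H(\mu)\ell$. I expect this limiting step to be the only delicate point, since the sum in step~4 has infinitely many terms. If the general continuity theorem is unwanted, there is an elementary alternative: condition on each value $\{L=k\}$ separately, where $V=V^{(k)}$, and apply Fatou/Tonelli to the resulting nonnegative terms.

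\emph{Bound on $H(L)$.} If $\ell=0$ then $L=0$ almost surely and both sides vanish. Otherwise, compare the law of $L$ with the geometric law
\[
g(k)=\frac{\ell^k}{(\ell+1)^{k+1}},\qquad k\geq0,
\]
which has mean $\ell$. Gibbs' inequality gives
\[
H(L)\leq-\E\log g(L)=\log(\ell+1)+\ell\log\frac{\ell+1}{\ell}=(\ell+1)\log(\ell+1)-\ell\log\ell .
\]
Finally, $\log(1+1/\ell)\leq1/\ell$ yields $\ell\log\frac{\ell+1}{\ell}\leq1$, which is the last inequality in \eqref{eq:word-bound}.
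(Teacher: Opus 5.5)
Your proof is correct. The bound on $H(L)$ is the paper's argument, namely Gibbs' inequality against the geometric law of mean $\ell$, with the computation written out.

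For $H(V\mid L)$ you take a different route. The paper conditions on $\{L=l\}$ and applies Gibbs' inequality to the whole word against the product measure $\mu^{\otimes l}$. This gives $H(V\mid L)\leq\E\sum_{j=S+1}^{R}-\log\mu(G_j)$ in one step. It then evaluates this expectation in absolute time. The only input is that $\{S<j\leq R\}$ is determined before the $j$th increment, hence independent of $G_j$, followed by Tonelli.

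You instead decompose letter by letter in time relative to $S$ and bound each chain-rule term by $H(V_j\mid\one_{\{L\geq j\}})$. You identify the law of $G_{S+j}$ on $\{L\geq j\}$ by the strong Markov property at the stopping time $S+j-1$. This uses the facts $\{R>S+j-1\}\in\mathcal G_{S+j-1}$ and independence of $G_{T+1}$ from $\mathcal G_T$ for finite stopping times $T$, both of which hold. The price is the truncation to $V^{(n)}$ and the limit $n\to\infty$. Your elementary version of that limit is sound, because $H(V^{(n)}\mid L=k)$ is nondecreasing in $n$ and equals $H(V\mid L=k)$ once $n\geq k$.

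The paper's cross-entropy argument treats words of unbounded length directly and needs only fixed-time independence, so it is shorter. Yours uses only the chain rule and monotonicity of conditional entropy, and it makes explicit that each existing letter of $V$ is an exact $\mu$-sample. Both give the same bound.
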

\begin{proof}
Conditional on $L=l$, compare the law of $V$ with the product measure $\mu^{\otimes l}$. Nonnegativity of relative entropy bounds its entropy by its cross-entropy. Averaging over $l$ gives
\[
H(V\mid L)\leq\E\sum_{j=S+1}^R-\log\mu(G_j).
\]
The event $\{S<j\leq R\}=\{S\leq j-1\}\cap\{R\geq j\}$ is measurable before the $j$th increment. It is therefore independent of $G_j$. Tonelli's theorem yields
\[
\E\sum_{j=S+1}^R-\log\mu(G_j)
=H(\mu)\sum_{j\geq1}\Prob(S<j\leq R)=H(\mu)\ell.
\]
Among distributions on $\{0,1,\ldots\}$ with mean $\ell$, comparison with the geometric law of that mean gives
\[
H(L)\leq(\ell+1)\log(\ell+1)-\ell\log\ell.
\]
For $\ell>0$, the last expression is $\log(\ell+1)+\ell\log(1+1/\ell)\leq\log(\ell+1)+1$. The case $\ell=0$ is immediate.
\end{proof}

\begin{theorem}\label{thm:finite-mean}
Let $H(\mu)<\infty$ and $\eta=\mu_\tau$, where $1\leq\tau<\infty$ almost surely, randomization is allowed, and $m=\E\tau<\infty$. If both measures are generating, then
\begin{equation}\label{eq:abramov}
H(\eta)<\infty,\qquad h(\eta)=m h(\mu),
\end{equation}
and the canonical factor $q:B_\eta\to B_\mu$ is an isomorphism modulo null sets.
\end{theorem}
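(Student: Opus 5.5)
The plan proceeds in three steps: finiteness of $H(\eta)$, the entropy scaling, and then boundary preservation from entropy equality.

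\emph{Finiteness of $H(\eta)$.} Apply Lemma~\ref{lem:word-entropy-bound} with $S=0$ and $R=\tau$. Since $X_\tau$ is a function of the stopped word $V$,
\[
H(\eta)=H(X_\tau)\leq H(V)\leq H(V\mid L)+H(L)\leq H(\mu)m+\log(m+1)+1<\infty .
\]

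\emph{Entropy scaling.} Iterate the rule as in Proposition~\ref{prop:factor}, giving times $T_k$ and the $\eta$-walk $Y_k=X_{T_k}$. By the strong law, $T_k/k\to m$ almost surely.

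For the upper bound on $h(\eta)$, I compare $Y_k$ with $X_{\lfloor km\rfloor}$. Write
\[
H(Y_k)\leq H(X_{n})+H(Y_k\mid X_n),\qquad n=\lfloor km\rfloor .
\]
Conditionally, $Y_k$ is obtained from $X_n$ by multiplying by the increment word between $n$ and $T_k$, taken in the appropriate direction. These segments lie between the stopping times $n\wedge T_k$ and $n\vee T_k$. Lemma~\ref{lem:word-entropy-bound}, applied to both orders together with the indicator of which one occurs, bounds this conditional entropy by $H(\mu)\E|T_k-n|+O(\log k)$. Since $\E|T_k-km|=o(k)$ by the $L^1$ law of large numbers, we get $h(\eta)\leq m h(\mu)$.

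For the lower bound, I use the boundary cocycle. The sequence $Y_k$ is an $\eta$-walk, and the conditional law of $b_\mu$ given $Y_k=g$ is $g\nu_\mu$, as in the proof of Theorem~\ref{thm:positive-mean}. By Lemma~\ref{lem:boundary-shannon} along the subsequence $T_k$,
\[
\frac{c(Y_k,b_\mu)}{k}\to m h(\mu)\quad\text{almost surely}.
\]
Lemma~\ref{lem:cocycle-entropy}, with $\alpha=\eta$ on $(B_\mu,\nu_\mu)$, gives $\E c(Y_k,b_\mu)=k\,h_\eta(B_\mu,\nu_\mu)$ and negative parts bounded by $1/e$. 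Fatou's lemma on the positive parts then yields
\[
m h(\mu)\leq h_\eta(B_\mu,\nu_\mu)\leq h(\eta).
\]
Hence $h(\eta)=m h(\mu)$, and moreover $h_\eta(B_\mu,\nu_\mu)=h(\eta)$.

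\emph{Boundary preservation.} The key fact is that $(B_\mu,\nu_\mu)$ is an $\eta$-boundary, being the factor $q$ of $B_\eta$, and it has full Furstenberg entropy $h(\eta)=h_\eta(B_\eta,\nu_\eta)$ by \eqref{eq:classical-entropy}. I then invoke Kaimanovich's entropy criterion: a $\eta$-boundary of finite-entropy $\eta$ whose Furstenberg entropy equals $h(\eta)$ is the whole Poisson boundary. To keep the argument self-contained, I would prove it via relative entropy. For the factor $q$, the kernels satisfy $K^\mu_g\circ q=\E[K^\eta_g\mid q^{-1}\mathcal B]$. Strict convexity of $x\log x$ then shows that equality of entropies forces $K^\eta_g=K^\mu_g\circ q$ almost surely for all $g\in\supp\eta^{*n}$. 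Consequently the conditional measures of $\nu_\eta$ on the fibres of $q$ are preserved under $g\in\supp\eta^{*n}$. Every bounded $\eta$-harmonic function then factors through $q$: its boundary value is measurable modulo the fibres, by the martingale convergence~\eqref{eq:mart-limit}. So $q$ is an isomorphism.

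I expect this last step to be the main obstacle. The delicate point is turning kernel equality into triviality of the fibres, that is, showing that a stationary extension with the same Radon--Nikodym cocycle carries no additional harmonic functions. If the direct argument proves too cumbersome, I would cite Kaimanovich's criterion~\cite{Kaimanovich2000} there, or the Furstenberg-entropy characterization in~\cite{KaimanovichVershik1983}.
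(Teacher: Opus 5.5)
Your proposal follows the paper's proof essentially step for step. It uses the same bound from Lemma~\ref{lem:word-entropy-bound} for $H(\eta)<\infty$. It obtains $h(\eta)\le mh(\mu)$ by the same comparison of $Y_k$ with $X_{\lfloor mk\rfloor}$, through the word between $n\wedge T_k$ and $n\vee T_k$. It obtains $mh(\mu)\le h_\eta(B_\mu,\nu_\mu)\le h(\eta)$ by the same cocycle-plus-Fatou argument. Finally, it uses the same conditional-Jensen and strict-convexity argument to make the kernels $K^\eta_g$ measurable with respect to $\mathcal A=q^{-1}\mathcal B(B_\mu)$.

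The only unfinished point is the one you flag yourself. Your sentence about fibre measures and ``martingale convergence'' is not yet an argument. The paper closes this step in two lines, with no disintegration and no appeal to Kaimanovich's criterion. Since $\eta$ is generating in the semigroup sense, $\bigcup_n\supp\eta^{*n}=G$. Alternatively, extend from $\supp\eta$ using the $G$-invariance of $\mathcal A$ and the cocycle identity. Either way, $K^\eta_g$ is $\mathcal A$-measurable for every $g\in G$. Then, for bounded $F$ on $B_\eta$,
\[
\Poi_\eta F(g)=\int F\,K^\eta_g\,d\nu_\eta=\int \E(F\mid\mathcal A)\,K^\eta_g\,d\nu_\eta=\Poi_\eta\bigl(\E(F\mid\mathcal A)\bigr)(g)\qquad(g\in G).
\]
Injectivity of the Poisson transform gives $F=\E(F\mid\mathcal A)$ almost everywhere. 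So $\mathcal A$ is the full boundary $\sigma$-algebra, and $q$ is an isomorphism.

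Citing the entropy criterion as your fallback would also be legitimate. This direct step, however, is what makes the paper's derivation of boundary preservation from entropy self-contained.
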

\begin{proof}
Apply Lemma~\ref{lem:word-entropy-bound} with $S=0$ and $R=\tau$. The terminal position is a function of the stopped word, so
\begin{equation}\label{eq:one-step-entropy}
H(\eta)\leq H(\tau)+mH(\mu)<\infty.
\end{equation}

We first compare asymptotic entropies. For the iterated terminal times $T_k$, put $N_k=\lfloor mk\rfloor$ and $L_k=|T_k-N_k|$. Knowledge of $X_{N_k}$, the order of $T_k$ and $N_k$, and the increment word between those times determines $X_{T_k}$. If $T_k<N_k$, right multiplication by the inverse of that word's product recovers $X_{T_k}$. Both $\min(T_k,N_k)$ and $\max(T_k,N_k)$ are stopping times. Lemma~\ref{lem:word-entropy-bound} therefore gives
\begin{equation}\label{eq:time-error}
H(\eta^{*k})\leq H(\mu^{*N_k})+\log2+H(L_k)+H(\mu)\E L_k.
\end{equation}
The $L^1$ law of large numbers for the independent integrable block lengths implies
\[
\frac1k\E|T_k-mk|\longrightarrow0,
\qquad \E L_k=o(k).
\]
It follows from \eqref{eq:word-bound} that $H(L_k)=o(k)$. Dividing \eqref{eq:time-error} by $k$ yields
\begin{equation}\label{eq:entropy-upper}
h(\eta)\leq m h(\mu).
\end{equation}

For the reverse inequality, use the original boundary $\Bmu$, its cocycle $c$, and the same coupling. By the strong law, $T_k/k\to m$. Lemma~\ref{lem:boundary-shannon} gives
\[
\frac{c(Y_k,b_\mu)}k\longrightarrow m h(\mu).
\]
The stationarity and conditional-law argument in Theorem~\ref{thm:positive-mean} applies unchanged. Thus Lemma~\ref{lem:cocycle-entropy} and Fatou's lemma give
\[
m h(\mu)\leq\liminf_k\frac{\E[c(Y_k,b_\mu)^+]}k
=h_\eta(B_\mu,\nu_\mu)\leq h(\eta).
\]
Together with \eqref{eq:entropy-upper}, this proves
\begin{equation}\label{eq:entropy-equalities}
m h(\mu)=h_\eta(B_\mu,\nu_\mu)=h(\eta).
\end{equation}

It remains to identify the factor. Write $(\widehat B,\widehat\nu)=\Beta$ and let $\mathcal A=q^{-1}\mathcal B(B_\mu)$, completed modulo $\widehat\nu$-null sets. For $g\in G$, put
\[
\widehat K_g=\frac{d(g\widehat\nu)}{d\widehat\nu},\qquad
k_g=\E_{\widehat\nu}(\widehat K_g\mid\mathcal A).
\]
The function $k_g$ is the pullback of the Radon--Nikodym kernel on the factor. By \eqref{eq:classical-entropy} for $\eta$ and \eqref{eq:entropy-equalities},
\begin{equation}\label{eq:jensen-equality}
0=\sum_g\eta(g)\left[
\int\widehat K_g\log\widehat K_g\,d\widehat\nu
-\int k_g\log k_g\,d\widehat\nu\right].
\end{equation}
Conditional Jensen's inequality makes each bracket nonnegative. For $\eta(g)>0$, its vanishing and strict convexity of $x\log x$ imply $\widehat K_g=k_g$ almost everywhere. Thus $\widehat K_g$ is $\mathcal A$-measurable for each $g\in\supp\eta$. Since $\mathcal A$ is $G$-invariant, the cocycle identity extends this measurability to products of support elements and hence to every $g\in G$.

If $F\in L^\infty(\widehat B,\widehat\nu)$ satisfies $\E(F\mid\mathcal A)=0$, then
\[
\Poi_\eta F(g)=\int F\widehat K_g\,d\widehat\nu=0
\quad(g\in G).
\]
Injectivity of the Poisson transform implies $F=0$. Apply this to the difference between any bounded function and its conditional expectation onto $\mathcal A$. Every bounded function is $\mathcal A$-measurable, so $\mathcal A$ is the full boundary sigma-algebra. The factor $q$ is an isomorphism.
\end{proof}

The entropy formula in \eqref{eq:abramov} is the stopping-time Abramov formula of \cite{Forghani2017}. The proof above obtains it together with boundary preservation. Estimate \eqref{eq:one-step-entropy} includes the information in a randomized duration. This term can be necessary: if $\mu=\delta_a$, $a\neq a^2$, and an independent fair coin chooses $\tau=1$ or $2$, then $H(\mu)=0$ while $H(\mu_\tau)=\log2$.

\begin{corollary}\label{cor:rigidity}
Suppose $\mu$ and $\eta=\mu_\tau$ generate the same group and have finite entropy. If the canonical factor $B_\eta\to B_\mu$ is proper, then
\[
h(\mu)=0,\qquad \E\tau=\infty,\qquad h(\eta)>0.
\]
\end{corollary}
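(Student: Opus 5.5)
The plan is to derive the three conclusions in sequence from Theorems~\ref{thm:positive-mean} and \ref{thm:finite-mean}, together with the classical entropy criterion \eqref{eq:classical-entropy}. Throughout, $\mu$ and $\eta=\mu_\tau$ are generating, have finite entropy, and $1\leq\tau<\infty$ almost surely. Therefore all hypotheses of those theorems, apart from the conditions on $h(\mu)$ and on $\E\tau$, are already in force.

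The first step proves $\E\tau=\infty$ by contraposition. Suppose $m=\E\tau<\infty$. Theorem~\ref{thm:finite-mean} applies, since $H(\mu)<\infty$ and both measures are generating. It states that the canonical factor $q:B_\eta\to B_\mu$ is an isomorphism modulo null sets. This contradicts the assumption that $q$ is proper. Hence $\E\tau=\infty$.

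The second step proves $h(\mu)=0$. If $h(\mu)>0$, then Theorem~\ref{thm:positive-mean} applies: both measures are generating with finite entropy, $\tau$ is positive and almost surely finite, and randomization is allowed. It gives $\E\tau<\infty$, which contradicts the first step. So $h(\mu)=0$.

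The third step proves $h(\eta)>0$. Properness of $q$ means that $B_\eta$ is not a one-point space modulo null sets. Indeed, if $B_\eta$ were trivial, then its factor $B_\mu$ would also be trivial, and $q$ would be an isomorphism. So $\eta$ is non-Liouville. Since $H(\eta)<\infty$, \eqref{eq:classical-entropy} gives $h(\eta)>0$.

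I expect no step to be a genuine obstacle; the only point needing care is the meaning of ``proper''. The argument uses that a proper canonical factor is not an isomorphism. It also uses that a proper factor forces $B_\eta$ to be nontrivial. Both follow directly from the definitions of $B_\mu\prec B_\eta$ and of the Liouville property.
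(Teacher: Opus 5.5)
Your proposal is correct and is essentially the paper's argument. Both proofs combine Theorem~\ref{thm:positive-mean}, Theorem~\ref{thm:finite-mean}, and \eqref{eq:classical-entropy}; the only difference is order, since you prove $\E\tau=\infty$ first and use it to exclude $h(\mu)>0$, whereas the paper excludes $h(\mu)>0$ by chaining the two theorems directly and rules out finite mean last.
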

\begin{proof}
If $h(\mu)>0$, Theorem~\ref{thm:positive-mean} gives finite mean, and Theorem~\ref{thm:finite-mean} makes the factor an isomorphism. Therefore $h(\mu)=0$, so the source is Liouville by \eqref{eq:classical-entropy}. Properness makes the target boundary nontrivial, whence $h(\eta)>0$. Finite mean is excluded once more by Theorem~\ref{thm:finite-mean}.
\end{proof}

\subsection{The entropy of an ordinary stopped history}
\begin{theorem}\label{thm:history}
Let $\tau$ be an ordinary almost surely finite positive stopping time, and let $W=(G_1,\ldots,G_\tau)$. If $0<H(\mu)<\infty$, then
\begin{equation}\label{eq:history-identity}
H(W)=H(\mu)\E\tau=H(X_\tau)+H(W\mid X_\tau)
\end{equation}
as an identity in $[0,\infty]$. In particular, if $H(\mu_\tau)<\infty$ and $\E\tau=\infty$, then $H(W\mid X_\tau)=\infty$.
\end{theorem}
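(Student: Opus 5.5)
The plan is to establish the two equalities in \eqref{eq:history-identity} separately. The first, $H(W)=H(\mu)\E\tau$, carries the whole content. The second is the chain rule: since $X_\tau=\pi(W)$ is a function of $W$, we have $H(W)=H(W,X_\tau)=H(X_\tau)+H(W\mid X_\tau)$. This chain rule is valid in $[0,\infty]$ because all terms are nonnegative, as long as we interpret it through finite truncations when $H(W)=\infty$. The final assertion is then immediate. If $\E\tau=\infty$ and $H(\mu)>0$, the first equality gives $H(W)=\infty$, and $H(X_\tau)=H(\mu_\tau)<\infty$ forces $H(W\mid X_\tau)=\infty$.

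For the first equality, use Proposition~\ref{prop:prefix}. Since $\tau$ is ordinary, $W$ takes values in a prefix-free complete code $\code$, with $\Prob(W=w)=p(w)=\prod_j\mu(a_j)$ exactly. This is where ordinariness is essential, since no auxiliary randomness splits a cylinder. Hence
\[
H(W)=\E\bigl[-\log p(W)\bigr]=\E\sum_{j=1}^{\tau}-\log\mu(G_j).
\]
The summands are nonnegative, so Tonelli applies. Because $\{\tau\geq j\}\in\F_{j-1}$ is independent of $G_j$,
\[
\E\sum_{j\geq1}\one_{\{\tau\geq j\}}\bigl(-\log\mu(G_j)\bigr)=\sum_{j\geq1}\Prob(\tau\geq j)H(\mu)=H(\mu)\E\tau,
\]
as in the proof of Lemma~\ref{lem:word-entropy-bound}. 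This holds in $[0,\infty]$, and the hypothesis $H(\mu)>0$ ensures that $\E\tau=\infty$ gives $H(W)=\infty$ rather than $0\cdot\infty$.

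The one delicate point is the chain rule when $H(W)=\infty$. Here I would argue directly. By the standard conditional formula, $H(W\mid X_\tau)=\sum_g\eta(g)H(W\mid X_\tau=g)$, and
\[
\sum_g\eta(g)H(W\mid X_\tau=g)+H(X_\tau)=\sum_{w}-p(w)\log p(w).
\]
This identity follows from $-\log p(w)=-\log\eta(\pi(w))-\log\bigl(p(w)/\eta(\pi(w))\bigr)$, with both pieces nonnegative. Summing nonnegative series in either order via Tonelli then gives the identity in $[0,\infty]$ with no finiteness assumption. I expect no serious obstacle; the main care is this rearrangement of nonnegative terms, together with the cross-entropy computation, which is exact rather than merely an upper bound precisely because $\tau$ is ordinary.
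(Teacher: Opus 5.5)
Your proposal is correct and follows the paper's proof essentially step for step: the complete prefix code gives $\Prob(W=w)=p(w)$ exactly, Tonelli together with independence of $G_j$ from $\{\tau\geq j\}\in\F_{j-1}$ yields $H(W)=H(\mu)\E\tau$, and the chain rule for the function $X_\tau=\pi(W)$ gives the second equality. Your splitting $-\log p(w)=-\log\eta(\pi(w))-\log\bigl(p(w)/\eta(\pi(w))\bigr)$ into nonnegative pieces explicitly justifies the extended-value chain rule, which the paper only asserts.
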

\begin{proof}
Let $\code$ be the complete prefix code from Proposition~\ref{prop:prefix}. For $w\in\code$, the probability of the stopped word $w$ is exactly $p(w)$. Hence
\begin{align*}
H(W)
&=\sum_{w\in\code}p(w)(-\log p(w))\\
&=\E\sum_{j=1}^{\tau}-\log\mu(G_j)\\
&=\sum_{j\geq1}\Prob(\tau\geq j)H(\mu)
=H(\mu)\E\tau.
\end{align*}
Tonelli's theorem applies because all summands are nonnegative. The third equality uses $\{\tau\geq j\}\in\F_{j-1}$ and independence of $G_j$ from that event. Since $X_\tau=\pi(W)$ is a function of $W$, the discrete entropy chain rule, also valid as an extended nonnegative sum, gives
\[
H(W)=H(X_\tau)+\sum_g\Prob(X_\tau=g)H(W\mid X_\tau=g).
\]
This proves \eqref{eq:history-identity} and its final assertion.
\end{proof}

\begin{corollary}\label{cor:injective-code}
Suppose $0<H(\mu)<\infty$, the stopping time is ordinary, and $H(\mu_\tau)<\infty$. If the endpoint map $\pi$ is injective on the stopping code, then $\E\tau<\infty$. If both measures are generating, the canonical boundary factor is an isomorphism.
\end{corollary}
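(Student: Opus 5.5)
The plan is to read off finiteness of the mean directly from the stopped-word identity of Theorem~\ref{thm:history}. Injectivity of $\pi$ on the code $\code$ from Proposition~\ref{prop:prefix} means that the stopped word $W$ is a function of its endpoint $X_\tau=\pi(W)$. Indeed, for each $g$ with $\mu_\tau(g)>0$ there is exactly one $w\in\code$ with $\pi(w)=g$. Away from the null set where $W\notin\code$, the word $W$ is therefore determined by $X_\tau$, and so $H(W\mid X_\tau=g)=0$ for every $g$ in the support of $\mu_\tau$.

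The identity \eqref{eq:history-identity} then collapses to
\[
H(\mu)\E\tau=H(W)=H(X_\tau)=H(\mu_\tau)<\infty,
\]
valid in $[0,\infty]$. Since $0<H(\mu)<\infty$, we can divide and obtain $\E\tau=H(\mu_\tau)/H(\mu)<\infty$. The only point needing care is that Theorem~\ref{thm:history} holds as an extended-value identity, so no a priori finiteness of $\E\tau$ is required before dividing. The hypothesis $H(\mu)>0$ is exactly what excludes the degenerate case $0\cdot\infty$.

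For the boundary statement, the argument is now an application of Theorem~\ref{thm:finite-mean}. Its hypotheses are that $H(\mu)<\infty$, that $\tau\geq1$ is almost surely finite (an ordinary stopping time is a special case of the randomized ones allowed there), that $m=\E\tau<\infty$, which we have just shown, and that both measures are generating, which is assumed. The theorem then says that the canonical factor $q:B_\eta\to B_\mu$ of Proposition~\ref{prop:factor} is an isomorphism modulo null sets. As a by-product, $h(\mu_\tau)=h(\mu)\,H(\mu_\tau)/H(\mu)$.

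I expect no real obstacle. The one thing to check is that injectivity is applied on the support: codewords of probability zero cannot occur, since every $p(w)>0$ because $\code\subseteq\bigcup_n A^n$. With that, the conditional entropy vanishes identically.
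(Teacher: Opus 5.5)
Your proposal is correct and follows the paper's proof exactly. Injectivity gives $H(W\mid X_\tau)=0$, so Theorem~\ref{thm:history} gives $H(\mu)\E\tau=H(\mu_\tau)<\infty$ and hence $\E\tau<\infty$, and Theorem~\ref{thm:finite-mean} then gives the isomorphism of the canonical factor.
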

\begin{proof}
Injectivity gives $H(W\mid X_\tau)=0$. Theorem~\ref{thm:history} yields $H(\mu_\tau)=H(\mu)\E\tau$, so the mean is finite. Apply Theorem~\ref{thm:finite-mean}.
\end{proof}

For the strict enlargements in Theorem~\ref{thm:locally-finite-growth}, Corollary~\ref{cor:rigidity} gives $\E\tau=\infty$, and Theorem~\ref{thm:history} gives $H(W\mid X_\tau)=\infty$. Proposition~\ref{prop:sampling-mean} makes the sampling stage integrable. Thus, for the realization \eqref{eq:sample-hit},
\[
\E\sigma<\infty,\qquad \E(\tau-\sigma)=\infty.
\]
The infinite expected duration occurs in the return to the sampled target. The entropy identity records the resulting multiplicity of histories with the same endpoint.

\section{Consequences and further questions}\label{sec:conclusions}
The preceding results constrain both the terminal law and the boundary of a stopped walk. Finite-entropy rigidity has a particularly simple consequence for successive stopping transformations.

\begin{corollary}
Let $(\mu_n)_{n\geq0}$ be generating finite-entropy measures on one countable group. Suppose each $\mu_{n+1}$ is obtained from $\mu_n$ by an almost surely finite positive stopping rule, ordinary or randomized. Among the canonical factors
\[
B_{\mu_0}\preccurlyeq B_{\mu_1}\preccurlyeq B_{\mu_2}\preccurlyeq\cdots,
\]
at most one is proper. After a proper factor occurs, every subsequent stopping rule has finite mean and preserves the boundary.
\end{corollary}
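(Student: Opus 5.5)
The plan is to apply Corollary~\ref{cor:rigidity} to each consecutive pair and to track the asymptotic entropy along the chain. The hypotheses of the corollary hold for every pair $(\mu_n,\mu_{n+1})$: both measures generate the group, both have finite entropy, and $\mu_{n+1}=(\mu_n)_{\tau_n}$ for a positive almost surely finite rule. By the compatibility clause of Proposition~\ref{prop:factor}, the canonical factors in the displayed chain are exactly the successive maps $B_{\mu_{n+1}}\to B_{\mu_n}$, and these compose. So the chain is proper at step $n$ precisely when the single factor $B_{\mu_{n+1}}\to B_{\mu_n}$ is proper.

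Suppose this factor is proper at step $n$. Corollary~\ref{cor:rigidity} then gives $h(\mu_{n+1})>0$, and we show by induction that $h(\mu_k)>0$ for every $k\geq n+1$. Assume $h(\mu_k)>0$. Theorem~\ref{thm:positive-mean} makes $\E\tau_k<\infty$. Theorem~\ref{thm:finite-mean} then gives $h(\mu_{k+1})=\E\tau_k\,h(\mu_k)$, and since $\E\tau_k\geq1$ this is positive. The same theorem also shows that the factor $B_{\mu_{k+1}}\to B_{\mu_k}$ is an isomorphism. This proves the final sentence of the statement: after a proper factor, every later rule has finite mean and preserves the boundary.

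Uniqueness of the proper step now follows. If steps $n<n'$ were both proper, Corollary~\ref{cor:rigidity} at step $n'$ would force $h(\mu_{n'})=0$. But the induction started at step $n$ gives $h(\mu_{n'})>0$ because $n'\geq n+1$, a contradiction.

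The only delicate point is that "proper" for the chain must mean properness of the individual canonical map. Compatibility in Proposition~\ref{prop:factor} guarantees that the composite of isomorphisms and at most one proper map is the canonical map of the composed rule, so no extra work is needed there. The entropy hypotheses are supplied by the assumption that every $\mu_n$ has finite entropy, so the finite-entropy requirements of Theorem~\ref{thm:positive-mean} hold directly; Theorem~\ref{thm:finite-mean} would give $H(\mu_{k+1})<\infty$ anyway.
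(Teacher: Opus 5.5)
Your proposal is correct and is essentially the paper's argument. Corollary~\ref{cor:rigidity} at the first proper step gives a target with positive asymptotic entropy, and Theorems~\ref{thm:positive-mean} and~\ref{thm:finite-mean} then propagate finite mean, boundary preservation, and positive entropy inductively along the rest of the chain. The only cosmetic difference is that you carry positivity forward through $h(\mu_{k+1})=\E\tau_k\,h(\mu_k)$. The paper phrases the same step as preservation of the nontrivial boundary, which is equivalent by the classical entropy criterion.
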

\begin{proof}
By Corollary~\ref{cor:rigidity}, the first proper factor has a Liouville source and a target with positive asymptotic entropy. Theorems~\ref{thm:positive-mean} and \ref{thm:finite-mean} apply to every subsequent step and preserve its nontrivial boundary. Induction proves the assertion.
\end{proof}

The same argument applies to an ordered family when each later stage is an actual stopping distribution of every earlier stage and the canonical factors are compatible. A limit of step distributions alone supplies no such factor map; the stopping relation remains part of the hypothesis.

There is also a direct entropy consequence for transient sources. If $H(\mu)<\infty$, $h(\mu)>0$, and $\eta$ is a generating finite-entropy randomized stopping distribution, then
\[
\sum_x\bigl(U(x)-(\eta*U)(x)\bigr)<\infty.
\]
This follows from Theorem~\ref{thm:positive-mean} and Corollary~\ref{cor:mean-potential}. Thus an infinite Green deficit rules out finite entropy of any generating stopping target when the source has positive asymptotic entropy.

The ordinary strict enlargements constructed here occur on infinitely generated locally finite groups. They lead to the following problem.

\begin{question}\label{q:finite-generation}
Does there exist an infinite finitely generated group $G$, generating measures $\mu,\eta$ on $G$ with $H(\mu),H(\eta)<\infty$, and an ordinary almost surely finite positive stopping time $\tau$ such that $\eta=\mu_\tau$ and $B_\mu\prec B_\eta$?
\end{question}

Any such construction must have a Liouville source, a non-Liouville target, and an infinite-mean stopping rule. It must also satisfy $H(W\mid X_\tau)=\infty$. For a transient source, the target must obey $\eta*U\leq U-\delta_e$ as well as the prefix condition. Finite-entropy Liouville mixtures such as those in \cite{ForghaniFrisch2026} provide candidate pairs of endpoint measures. Proposition~\ref{prop:mixture} shows that moving a mixture closer to its source does not change the randomized embedding condition, while Proposition~\ref{prop:ff-obstruction} gives pairs for which that condition fails.

\section*{\textbf{Acknowledgements}}
Yinghan Zhang is partially supported by Natural Science Foundation of China under Grant No.~12101370
and Natural Science Foundation of Shandong Province under Grant ZR2020QA020.

\clearpage
\printbibliography[heading=bibintoc,title={References}]
\end{document}